\definecolor{fgreen}{RGB}{44,144, 14}
\renewenvironment{proof}{{\bfseries Proof.}}{\qed}
\numberwithin{equation}{section} 
\newtheorem{theorem}{Theorem}[section] 
\newtheorem{proposition}[theorem]{Proposition} 
\newtheorem{corollary}[theorem]{Corollary} 
\newtheorem{lemma}[theorem]{Lemma} 
\theoremstyle{definition}
\newtheorem{definition}[theorem]{Definition} 
\newtheorem{remark}[theorem]{Remark} 
\def\R{\mathbb R}
\def\s{\mathbb S}
\def\C{\mathbb C}
\def\ib{\mathbf {i}}
\def\D{\mathbb D}
\def\N{\mathbb N}
\def\R{\mathbb R}
\def\p{\mathcal P}
\def\g{\mathcal G}
\def\M{{\mathrm M}}
\def\R{\mathbb {R}}
\def\C{\mathbb {C}}
\def\N{\mathbb {N}}
\def\ib{\mathbf {i}}
\def\g{\mathfrak {g}}
\def\p{\mathfrak {p}}
\def\s{\mathfrak {s}}
\def\lto{\longrightarrow}
\newcommand{\defref}[1]{Definition~\ref{#1}}
\newcommand{\secref}[1]{Section~\ref{#1}}
\newcommand{\thmref}[1]{Theorem~\ref{#1}}
\newcommand{\lemref}[1]{Lemma~\ref{#1}}
\newcommand{\propref}[1]{Proposition~\ref{#1}}
\begin{document}

\title[Adjoint reality in complex symplectic Lie algebra]{Strongly real adjoint orbits of complex symplectic Lie group}

 \author[ T. Lohan and C. Maity]{Tejbir Lohan and Chandan Maity}

\address{Indian Institute of Technology Kanpur, Kanpur-208016, Uttar Pradesh, India}

\email{tejbirlohan70@gmail.com
	}

\address{Indian Institute of Science Education and Research (IISER) Berhampur, 
	Berhampur-760010, Odisha, India}
\email{cmaity@iiserbpr.ac.in}

\makeatletter
\@namedef{subjclassname@2020}{\textup{2020} Mathematics Subject Classification}
\makeatother

\subjclass[2020]{Primary: 15A21, 15B30;    Secondary:  22E60, 20E45}

\keywords{Reversibility, adjoint reality, symplectic Lie algebra, symplectic matrices, Hamiltonian matrices, skew-Hamiltonian matrices}


\begin{abstract} We consider the adjoint action of the symplectic Lie group $\mathrm{Sp}(2n,\mathbb{C})$ on its Lie algebra $\mathfrak{sp}(2n,\mathbb{C})$. An element  $X \in \mathfrak{sp}(2n,\mathbb{C})$ is called $\mathrm{Ad}_{\mathrm{Sp}(2n,\mathbb{C})}$-real  if $ -X = \mathrm{Ad}(g)X$ for some $g \in \mathrm{Sp}(2n,\mathbb{C})$. Moreover, if  $ -X = \mathrm{Ad}(h)X $ for some involution  $h \in \mathrm{Sp}(2n,\mathbb{C})$, then $X \in \mathfrak{sp}(2n,\mathbb{C})$ is called strongly $\mathrm{Ad}_{\mathrm{Sp}(2n,\mathbb{C})}$-real. In this paper, we prove that for every element $X \in \mathfrak{sp}(2n,\mathbb{C})$, there exists a skew-involution $g \in  \mathrm{Sp}(2n,\mathbb{C})$ such that $-X =\mathrm{Ad}(g)X$. Furthermore, we classify the strongly $\mathrm{Ad}_{\mathrm{Sp}(2n,\mathbb{C})}$-real elements in $\mathfrak{sp}(2n,\mathbb{C})$. We also classify skew-Hamiltonian matrices that are similar to their negatives via a symplectic involution.
\end{abstract}

\maketitle 
\setcounter{tocdepth}{1}

\section{Introduction} 
Let $\mathrm{M}(n,\C)$ be the algebra of $n \times n$ matrices over $\C$, and ${\rm GL}(n,\C) $ be  the group of invertible elements in  ${\rm M}(n,\C)$.  
Consider the symplectic Lie group $ \mathrm{Sp}(2n,\C) := \{ g \in \mathrm{GL}(2n,\C) \mid g^T {\rm J}_{2n} g = {\rm J}_{2n} \}$ and its Lie algebra $\mathfrak{sp}(2n,\C)  := \{ X \in  \mathrm{M}(2n,\C) \mid X^T {\rm J}_{2n}  =- {\rm J}_{2n}X  \}$, where $
{\rm J}_{2n}  :=\begin{psmallmatrix}
	& \mathrm{I}_n\\
	- \mathrm{I}_n & 
\end{psmallmatrix}$ and $ \mathrm{I}_n$ denotes the $ n \times n $ identity matrix.
The elements of $ \mathrm{Sp}(2n,\C) $ and $\mathfrak{sp}(2n,\C)$ are  known in the literature as \textit{symplectic} and \textit{Hamiltonian} matrices, respectively.

Let $G$ be a group. An element of $ G$ is called \textit{reversible} or \textit{real} if it is conjugate to its inverse in $G$. 
An element of $ G$ is called  \textit{strongly reversible} or \textit{strongly real} if it is conjugate to its inverse by an involution (i.e., an element of order at most two) in $G$.   
It follows that an element of $ G$ is strongly reversible if and only if it can be expressed as a product of two involutions in $ G$. Moreover, every strongly reversible element in a group is reversible, but the converse is not always true. Such elements naturally appear in various areas, such as group theory, representation theory, geometry, complex analysis, functional equations, and classical dynamics; see \cite{ Wo,  TZ, BM,  O'Fa}. Thus, it has been a problem of broad interest    
to classify reversible and strongly reversible elements in a group; see \cite{OS} for an elaborate exposition of this theme.

Recently, in \cite{GM1}, the authors introduced the notion of adjoint reality  (an infinitesimal analog of classical reversibility) to a  Lie algebra using the natural adjoint action of a Lie group on the associated Lie algebra. Let $G$ be a Lie group with Lie algebra $\g$. Consider the  adjoint representation   ${\rm Ad} : G \lto \mathrm{GL}( \mathfrak{g} )$ of $G$ on $\g$. For $X\in \g$, the adjoint orbit of $X$ is the set $\{{\rm Ad}(g)X\mid g\in G \}$. 

\begin{definition}[cf.~{\cite[Definition 1.1]{GM1}}]
	An element $X\in \g$ is called $ {\rm Ad}_G$-real  if ${\rm Ad}(g)X  =-X$ for some $g\in G$.  An  element $X\in \g$  is called strongly $ {\rm Ad}_G$-real if $ {\rm Ad}(h)X  =-X  $ for some involution   $h\in G$.
\end{definition}
In the case of a linear Lie group $G$, the Ad-representation is given by conjugation, i.e., ${\rm Ad}(g)X = gXg^{-1}$.   
Note that if $X\in \g$ is  $ {\rm Ad}_G$-real (resp. strongly $ {\rm Ad}_G$-real), then $\exp X$ is reversible (resp. strongly reversible) in $G$. Using the notion of adjoint reality, the reversible and strongly reversible unipotent elements in classical simple Lie groups are classified in \cite{GM1}. 
This notion also plays a vital role in the investigation of reversibility in the general linear group $\mathrm{GL}(n,\D)$ and the affine group $\mathrm{GL}(n,\D) \ltimes \D^{n}$, where $\D =\R, \C$ or the division ring $\mathbb{H}$ of real quaternions; see  \cite{GLM2, GLM3}.

Let $G$ be a Lie group. The Lie algebra of $G$ is denoted by $\mathfrak{g}$ or ${\rm Lie}(G)$. A natural problem is to give a classification of the $ {\rm Ad}_G$-real and strongly $ {\rm Ad}_G$-real elements in $\mathfrak{g}$. We investigated this question in \cite{GLM1} for the special linear Lie algebra $\mathfrak{sl}(n,\mathbb{F})$ and  classify the $\mathrm{Ad}_{\mathrm{SL}(n,\mathbb{F})}$-real  and strongly $\mathrm{Ad}_{\mathrm{SL}(n,\mathbb{F})}$-real  orbits in  $\mathfrak{sl}(n,\mathbb{F})$, where $\mathbb{F}= \mathbb{C}$ or  $\mathbb{H}$. Recently, in \cite{GM2}, the authors investigated the adjoint reality of semisimple elements in complex simple classical Lie algebras. 
They also investigated  $\mathrm{Ad}_{\mathrm{Sp}(2n,\mathbb{C})}$-real  elements in $\mathfrak{sp}(2n,\mathbb{C})$ using the description of the centralizers of  nilpotent elements.  
In this article, we will revisit the adjoint reality problem in the complex symplectic Lie algebra $\mathfrak{sp}(2n,\mathbb{C})$.  
In our first result, we can say more about a reversing element that conjugates $X$ to $-X$.

\begin{theorem}\label{thm-real-skew-inv}
	For  every element $X \in \mathfrak{sp}(2n,\mathbb{C})$,  there exists a skew-involution $g$ (i.e., $g^2 =-{\rm I}_{2n}$) in  $\mathrm{Sp}(2n,\mathbb{C})$ such that $-X =gXg^{-1}$. Hence,  every element in $\mathrm{Lie(PSp}(2n,\C))$ is strongly $\mathrm{Ad}_{\mathrm{PSp}(2n,\mathbb{C})}$-real.  
\end{theorem}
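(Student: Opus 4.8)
The strategy is to reduce the general statement to a case-by-case analysis on the Jordan-type normal form of Hamiltonian matrices under the symplectic group, and in each indecomposable block produce an explicit skew-involution $g$ with $gXg^{-1} = -X$. First I would recall the classification of adjoint orbits in $\mathfrak{sp}(2n,\C)$: every $X$ is $\mathrm{Sp}(2n,\C)$-conjugate to a direct sum of indecomposable symplectic Jordan blocks, which come in the standard families — nilpotent blocks of odd size carrying a symmetric form, pairs of nilpotent blocks of even size, and for a nonzero eigenvalue $\lambda$ the block $J_k(\lambda) \oplus -J_k(\lambda)^T$ (so that eigenvalues $\lambda$ and $-\lambda$ are paired by the form). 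Since conjugating $X$ to a normal form $X_0$ by $h_0 \in \mathrm{Sp}(2n,\C)$ replaces a reversing skew-involution $g_0$ for $X_0$ by $h_0^{-1} g_0 h_0$, which is again a skew-involution conjugating $X$ to $-X$, it suffices to treat each indecomposable block.

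Next I would handle the blocks one family at a time. For a block with eigenvalue pair $\{\lambda, -\lambda\}$ of the form $X = \operatorname{diag}(A, -A^T)$ with $A = J_k(\lambda)$, the negative $-X = \operatorname{diag}(-A, A^T)$ is realized by the swap $g = \begin{psmallmatrix} 0 & P \\ -P^{-1} & 0\end{psmallmatrix}$ for a suitable $P$ intertwining $A$ with $A^T$; choosing $P$ to be (a scalar multiple of) the exchange matrix makes $g$ lie in $\mathrm{Sp}$ and satisfy $g^2 = -\mathrm{I}$ outright. The genuinely delicate family is the \emph{nilpotent} one (which also governs, after adding scalars, any block with $\lambda = 0$ contribution and is exactly where \cite{GM2} used centralizer descriptions): here $-X$ and $X$ are already conjugate in $\mathrm{GL}$ since they have the same Jordan type, and one must arrange a conjugating element that is simultaneously (i) symplectic and (ii) a skew-involution. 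For an odd-size nilpotent block $N$ with its invariant symmetric bilinear form, the natural reversing element is the diagonal sign pattern $\operatorname{diag}(1,-1,1,-1,\dots)$ twisted by the form; I would check whether this is symplectic and, if its square is $+\mathrm{I}$ rather than $-\mathrm{I}$, multiply by a scalar $i$ or combine two such blocks so that the resulting element squares to $-\mathrm{I}$ while staying in $\mathrm{Sp}$. For the even-size nilpotent blocks, which occur in pairs, one can again use a swap-type construction as in the eigenvalue-pair case.

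The main obstacle I anticipate is precisely the parity bookkeeping in the nilpotent blocks: a single reversing involution for an odd nilpotent block naturally squares to $+\mathrm{I}$, so to upgrade "$\mathrm{Ad}$-real'' to "realized by a \emph{skew}-involution'' one must exploit the freedom to rescale by $i$ (available since we are over $\C$) and verify that the rescaled element still preserves ${\rm J}_{2n}$ — this uses that $\mathrm{Sp}(2n,\C)$ contains $i\,\mathrm{I}_{2n}$ only via block combinations, not naively. Once every indecomposable block admits such a $g$, taking the direct sum of the block-wise skew-involutions gives a global $g \in \mathrm{Sp}(2n,\C)$ with $g^2 = -\mathrm{I}_{2n}$ and $gXg^{-1} = -X$. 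Finally, since $g^2 = -\mathrm{I}_{2n}$ means the image $\bar g$ in $\mathrm{PSp}(2n,\C) = \mathrm{Sp}(2n,\C)/\{\pm\mathrm{I}_{2n}\}$ is an involution, and $\mathrm{Ad}$ factors through $\mathrm{PSp}$, every $X \in \mathrm{Lie}(\mathrm{PSp}(2n,\C)) = \mathfrak{sp}(2n,\C)$ is strongly $\mathrm{Ad}_{\mathrm{PSp}(2n,\C)}$-real, which is the second assertion.
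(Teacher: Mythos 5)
Your overall route is the same as the paper's: reduce to a symplectic canonical form, exhibit an explicit reversing skew-involution for each canonical block (an exchange-type swap for $\mathrm{J}(\lambda,k)\oplus-\mathrm{J}(\lambda,k)^{T}$, and a sign-alternating diagonal suitably rescaled by $i$ for the single nilpotent block), assemble the blockwise reversers into a global symplectic skew-involution, and then pass to $\mathrm{PSp}(2n,\C)$. These are exactly the constructions in the paper's Lemma~\ref{lem-real-non-nil-skew-construct} and Lemma~\ref{lem-real-nil-skew-construct}, applied to the canonical form of Proposition~\ref{prop-modified-canonical-form}, and your concluding argument for the $\mathrm{PSp}$ statement is correct.

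There is, however, a concrete error in the classification you use to organize the case analysis: the parities of the nilpotent families are swapped. In $\mathfrak{sp}(2n,\C)$ it is the \emph{odd}-size nilpotent Jordan blocks that must occur in pairs, while an \emph{even}-size nilpotent block can occur singly (e.g.\ the regular nilpotent $\begin{psmallmatrix}0&1\\0&0\end{psmallmatrix}\in\mathfrak{sp}(2,\C)$); a lone odd-size nilpotent block cannot occur at all, since an odd-dimensional space admits no symplectic form (a single odd block carries an invariant \emph{symmetric} form, which is the orthogonal situation, not the symplectic one). As written, your case list therefore contains a phantom family and, more importantly, omits the genuinely delicate case: a single even nilpotent block, realized symplectically as $\begin{psmallmatrix}\mathrm{J}(0,l)&\mathrm{I}_{l}\\&-\mathrm{J}(0,l)^{T}\end{psmallmatrix}$, which your swap construction cannot handle when such a block has odd multiplicity. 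The idea you reserve for the ``single block'' case is nevertheless the right one for it: in the above realization one takes $g=\begin{psmallmatrix}i\sigma&\\&-i\sigma\end{psmallmatrix}$ with $\sigma=\mathrm{diag}(1,-1,1,\dots)$, where the \emph{opposite} signs of $i$ on the two half-blocks are what keep $g$ in $\mathrm{Sp}(2l,\C)$ (a global factor of $i$ would send ${\rm J}_{2l}$ to $-{\rm J}_{2l}$ and leave the group). So the method goes through once the decomposition is corrected, but as stated the case analysis does not cover all of $\mathfrak{sp}(2n,\C)$.
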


Consequently, the following result follows immediately from  \thmref{thm-real-skew-inv}.

\begin{corollary} [cf.~{\cite[Theorem 4.2]{GM2}}] \label{cor-real-sp(2n,C)} 
	Every element of $\mathfrak{sp}(2n,\mathbb{C})$ is $\mathrm{Ad}_{\mathrm{Sp}(2n,\mathbb{C})}$-real.
\end{corollary}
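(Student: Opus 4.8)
The plan is to deduce \corref{cor-real-sp(2n,C)} from \thmref{thm-real-skew-inv} in a single line. By \thmref{thm-real-skew-inv}, for every $X \in \mathfrak{sp}(2n,\mathbb{C})$ there exists $g \in \mathrm{Sp}(2n,\mathbb{C})$ with $g^2 = -\mathrm{I}_{2n}$ such that $-X = gXg^{-1} = \mathrm{Ad}(g)X$. In particular, \emph{some} element of $\mathrm{Sp}(2n,\mathbb{C})$ conjugates $X$ to $-X$, which is exactly the definition of $X$ being $\mathrm{Ad}_{\mathrm{Sp}(2n,\mathbb{C})}$-real. Since $X$ was arbitrary, every element of $\mathfrak{sp}(2n,\mathbb{C})$ is $\mathrm{Ad}_{\mathrm{Sp}(2n,\mathbb{C})}$-real.

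There is essentially no obstacle here: the corollary is a strict weakening of \thmref{thm-real-skew-inv}, obtained by forgetting the extra information that the reversing element can be chosen to be a skew-involution. The only thing worth remarking is why the stronger theorem does not also force \emph{strong} $\mathrm{Ad}_{\mathrm{Sp}(2n,\mathbb{C})}$-reality: a skew-involution $g$ satisfies $g^2 = -\mathrm{I}_{2n} \neq \mathrm{I}_{2n}$, so $g$ has order $4$ rather than order $\le 2$, and hence is not an involution in $\mathrm{Sp}(2n,\mathbb{C})$. Thus the passage from $\mathrm{Sp}(2n,\mathbb{C})$ to $\mathrm{PSp}(2n,\mathbb{C}) = \mathrm{Sp}(2n,\mathbb{C})/\{\pm \mathrm{I}_{2n}\}$ in \thmref{thm-real-skew-inv} is precisely what upgrades reality to strong reality, because the image of $g$ in $\mathrm{PSp}(2n,\mathbb{C})$ has order $2$; no such upgrade is available in $\mathrm{Sp}(2n,\mathbb{C})$ itself, and indeed the bulk of the present paper is devoted to determining exactly which $X$ admit a genuine symplectic involution reversing them. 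This confirms that the corollary is the sharp consequence of \thmref{thm-real-skew-inv} at the level of $\mathrm{Sp}(2n,\mathbb{C})$-reality, and recovers \cite[Theorem 4.2]{GM2}.
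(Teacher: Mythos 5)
Your proposal is correct and is exactly the paper's argument: the corollary is stated there as an immediate consequence of \thmref{thm-real-skew-inv}, since any reversing skew-involution is in particular a reversing element of $\mathrm{Sp}(2n,\mathbb{C})$. Your side remark about why this does not yield strong $\mathrm{Ad}_{\mathrm{Sp}(2n,\mathbb{C})}$-reality (a skew-involution has order $4$ in $\mathrm{Sp}(2n,\mathbb{C})$ but becomes an involution only in $\mathrm{PSp}(2n,\mathbb{C})$) is also accurate and consistent with the paper.
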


Recall	 that  every element of $\mathrm{Sp}(2n,\mathbb{C})$ is conjugate to its inverse by a skew-involution in $\mathrm{Sp}(2n,\mathbb{C})$, and hence every element of symplectic Lie group $\mathrm{Sp}(2n,\mathbb{C})$ is reversible; see  \cite[Theorem 5.6]{JP}. 
It is worth mentioning that  \thmref{thm-real-skew-inv} can be thought of as a Lie algebra analog of \cite[Theorem 5.6]{JP}.

Our next  result  classifies the strongly $\mathrm{Ad}_{\mathrm{Sp}(2n,\mathbb{C})}$-real elements of $\mathfrak{sp}(2n,\mathbb{C})$.  We refer to \secref{subsec-canonical-form} for the definition of the Jordan block $ \mathrm{J}(\lambda, m)$ of size $m$ corresponding to the eigenvalue $\lambda \in \C$. 
\begin{theorem}\label{thm-main-str-real-sp(2n,C)}
	An element $X \in \mathfrak{sp}(2n,\mathbb{C})$ is strongly $\mathrm{Ad}_{\mathrm{Sp}(2n,\mathbb{C})}$-real  if and only if the Jordan blocks in the Jordan decomposition of  $X$ satisfy the following conditions:
	\begin{enumerate}
		\item \label{cond-1-main-thm}
		Every nilpotent Jordan block  $ \mathrm{J}(0, 2m)$ of even size has even multiplicity.
		
		\item\label{cond-2-main-thm} For every non-zero eigenvalue $\lambda$, the Jordan block $ \mathrm{J}(\lambda, k)$ has even multiplicity.
	\end{enumerate}
\end{theorem}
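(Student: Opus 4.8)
The plan is to reduce the problem to the structure of symplectic modules over $\C[t]$ and analyze each indecomposable piece separately. First I would recall the standard fact that conjugating $X$ to $-X$ in $\GL(2n,\C)$ is possible iff $X$ and $-X$ have the same Jordan form, which here always holds by Theorem~\ref{thm-real-skew-inv} (indeed every $X\in\mathfrak{sp}(2n,\C)$ is $\mathrm{Ad}_{\mathrm{Sp}(2n,\C)}$-real). So the only question is whether the conjugating element can be taken to be a genuine involution $h\in\mathrm{Sp}(2n,\C)$ with $h^2=\mathrm{I}_{2n}$. The key reformulation is: $X$ is strongly $\mathrm{Ad}_{\mathrm{Sp}(2n,\C)}$-real iff there is $h\in\mathrm{Sp}(2n,\C)$, $h^2=\mathrm{I}_{2n}$, with $hXh^{-1}=-X$; equivalently the pair $(X,\mathrm{J}_{2n})$ together with the extra symmetry $h$ makes $\C^{2n}$ into a module over the group $\langle t, s\mid s^2=1, sts^{-1}=t^{-1}\rangle$-type algebra, but additively: $s$ acts by $h$, $X$ acts as a nilpotent-plus-semisimple operator anti-commuting with $h$, and $\mathrm{J}_{2n}$ is an $h$-invariant symplectic form for which $X$ is infinitesimally symplectic.

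**Key steps.**

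I would proceed as follows.
\begin{enumerate}
\item[(i)] \textbf{Decompose into generalized eigenspaces.} Write $X = S + N$ (Jordan decomposition, both in $\mathfrak{sp}(2n,\C)$). Since $h$ anticommutes with $X$, it anticommutes with $S$ and with $N$ separately, so $h$ maps the $\lambda$-eigenspace $V_\lambda$ of $S$ to $V_{-\lambda}$. Thus $V_0$ is $h$-invariant, and for $\lambda\neq 0$ the pair $V_\lambda\oplus V_{-\lambda}$ is $h$-invariant; the symplectic form pairs $V_\lambda$ with $V_{-\lambda}$ (standard for $\mathfrak{sp}$). One then checks each block type contributes independently to both the existence of $h$ and to the orbit data.
\item[(ii)] \textbf{The nonzero-eigenvalue blocks (condition~(2)).} On $V_\lambda\oplus V_{-\lambda}$ with $\lambda\neq0$, the form is a perfect pairing between the two halves, and $N$ acts as a nilpotent on each. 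Here I would show that an $h$ with $h^2=\mathrm{I}$ swapping the halves and intertwining $N|_{V_\lambda}$ with $-N|_{V_{-\lambda}}$ together with preserving the symplectic pairing exists precisely when the Jordan type of $N$ on $V_\lambda$ has every block of even multiplicity — i.e. one can organize the Jordan blocks of $\mathrm{J}(\lambda,k)$ into pairs and build $h$ on each $4k$-dimensional summand explicitly. The parity obstruction comes from comparing the $+1$ and $-1$ eigenspaces of $h$ and forcing them to be Lagrangian-complementary, which is only possible when certain multiplicities are even.
\item[(iii)] \textbf{The nilpotent part (condition~(1)).} On $V_0$, $X=N$ is nilpotent and $h$-invariant with $hNh^{-1}=-N$ and $h$ symplectic. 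Decompose the symplectic nilpotent $N$ into its standard indecomposable symplectic summands: odd-size Jordan blocks $\mathrm{J}(0,2m+1)$ carry a symmetric-type form and occur with no parity constraint, while even-size blocks $\mathrm{J}(0,2m)$ pair up into symplectic summands of dimension $4m$. I would analyze the even-size case and show the involution $h$ anticommuting with $N$ forces the multiplicity of each $\mathrm{J}(0,2m)$ to be even, while odd-size blocks always admit such an $h$ (construct it block-by-block: on a single $\mathrm{J}(0,2m+1)$-type symplectic summand, the map $e_i\mapsto (-1)^{i}e_i$ works up to sign normalization).
\item[(iv)] \textbf{Assemble.} Conversely, given that the multiplicity conditions (1) and (2) hold, build $h$ summand-by-summand from the explicit involutions in (ii) and (iii), check $h^2=\mathrm{I}_{2n}$ and $h\in\mathrm{Sp}(2n,\C)$, and conclude.
\end{enumerate}

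**Main obstacle.**

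The crux — and the step I expect to require the most care — is step~(ii)/(iii): proving the \emph{necessity} of the even-multiplicity conditions. Showing sufficiency is essentially an explicit construction, but necessity requires a genuine invariant. The natural approach is to look at $V_+ = \ker(h-\mathrm{I})$ and $V_-=\ker(h+\mathrm{I})$, note that $N$ swaps $V_+$ and $V_-$ (since $hNh^{-1}=-N$ means $N$ is "odd" for the $\Z/2$-grading given by $h$), so on $V_0$ the grading together with $N$ makes $V_0$ a graded module over $\C[N]$ with $N$ of degree~$1$; simultaneously $h\in\mathrm{Sp}$ means $V_+$ and $V_-$ are each isotropic, hence Lagrangian, forcing $\dim V_+=\dim V_-=n$. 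Combining the graded-$\C[N]$-module structure with the Lagrangian splitting and using that the symplectic form has a definite "sign behavior" on odd- versus even-length Jordan strings is what pins down the parity. I would phrase this via the normal form of $(N,\mathrm{J}_{2n})$ and a careful bookkeeping of how a degree-one nilpotent compatible with both a symplectic form and a $\Z/2$-grading must permute strings; the even-size nilpotent blocks and the nonzero-eigenvalue blocks are exactly the strings that cannot be self-paired under this combined structure, whence their multiplicities must be even.
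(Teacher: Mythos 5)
There is a genuine gap, and it sits exactly at the step you yourself identify as the crux (necessity of the parity conditions). Your proposed invariant rests on the claim that since $h\in\Sp(2n,\C)$ and $h^2=\mathrm{I}_{2n}$, the eigenspaces $V_{+}=\ker(h-\mathrm{I})$ and $V_{-}=\ker(h+\mathrm{I})$ are isotropic, hence Lagrangian, forcing $\dim V_{+}=\dim V_{-}=n$. This is false: for a \emph{symplectic} involution one has $\omega(hx,hy)=\omega(x,y)$, which gives no constraint on $\omega$ restricted to $V_{+}$ or to $V_{-}$; what it gives is $\omega(V_{+},V_{-})=0$, so $V_{+}$ and $V_{-}$ are mutually $\omega$-orthogonal \emph{nondegenerate (symplectic)} subspaces of even dimension, and their dimensions need not be equal (take $h=\mathrm{I}_{2n}$). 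Lagrangian eigenspaces arise for anti-symplectic involutions, or one gets related phenomena for skew-involutions $g^2=-\mathrm{I}_{2n}$ (which is why Theorem~\ref{thm-real-skew-inv} produces skew-involutions so easily), but not here. Since your whole mechanism for extracting the even-multiplicity obstruction in steps~(ii)--(iii) is the ``Lagrangian-complementary'' splitting, the necessity direction is not established. A secondary error points the same way: in step~(iii) you have the symplectic nilpotent normal form backwards --- in $\mathfrak{sp}(2n,\C)$ it is the \emph{odd}-size nilpotent blocks that are forced (automatically) to occur in pairs, while an \emph{even}-size block $\mathrm{J}(0,2m)$ individually carries an invariant symplectic structure (this is the summand $\begin{psmallmatrix}\mathrm{J}(0,m)&\mathrm{I}_m\\&-\mathrm{J}(0,m)^T\end{psmallmatrix}$ in the canonical form); the content of condition~(1) is precisely that strong reality forces the even blocks to pair up, so this cannot be fed in as part of the ``standard indecomposable symplectic summands.''

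A corrected version of your grading idea is not hopeless --- with $V_{\pm}$ symplectic and mutually orthogonal, $N$ odd for the grading, and $\omega(Nu,v)=-\omega(u,Nv)$, careful string bookkeeping can recover the parity constraints --- but it is genuinely delicate and is not what you wrote. For comparison, the paper avoids this analysis altogether: a reduction lemma (Lemma~\ref{lem-factor-str-real}) splits any reversing involution along spectrum-disjoint expanding summands using the fact that the reverser set is a coset of the centralizer and that centralizers respect the expanding-sum decomposition; necessity for the nilpotent part is then quoted from the known classification of strongly $\mathrm{Ad}$-real nilpotents (Lemma~\ref{lem-str-real-nil-symp-alg}), and necessity for nonzero eigenvalues is obtained by exponentiating and invoking the classification of strongly reversible elements of $\Sp(2n,\C)$ (Lemma~\ref{lem-str-rev-symp-grp}). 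Your sufficiency outline (explicit involutions built block-pair by block-pair) is in the same spirit as the paper's and is fine, but as it stands the hard half of the theorem is unproved.
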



Our approach in this paper is based on certain canonical forms of elements in $\mathfrak{sp}(2n,\mathbb{C})$. 
We have suitably modified the canonical form given in \cite[Lemma 6]{Cr} for our purposes. The notion of the expanding sum of matrices (see \defref{def-expanding-sum}) and the structure of the reversing symmetry group for $X \in \mathfrak{sp}(2n,\mathbb{C})$ (see \secref{sec-rev-symm-gp}) play a vital role here.

Finally, in Section \ref{sec-skew-class}, we consider the set $\mathcal{SH}(2n, \C):= \{ X \in  \mathrm{M}(2n, \C) \mid X^T {\rm J}_{2n}  = {\rm J}_{2n}X  \}$ of \textit{skew-Hamiltonian} matrices in  ${\M}(2n, \C) $. Recall that  two matrices $A, B \in \mathrm{M}(m,\mathbb{C})$  are called {\it similar} if there exists a matrix $g\in {\rm GL}(m, \C)$ such that $gAg^{-1} =B$. 
Furthermore, when $A$ and $B$ are in $\mathrm{M}(2n,\mathbb{C})$, they are said to be  {\it symplectically similar} if there exists a symplectic matrix  $h \in \mathrm{Sp}(2n,\mathbb{C})$ such that $hAh^{-1} =B$. 
We prove the following result, which classifies the elements of $ \mathcal{SH}(2n, \C)$ that are symplectically similar to their own negatives.
\begin{theorem}\label{thm-class-skew-symp}
	An element $X \in \mathcal{SH}(2n,\C)$  is similar to $-X$ if and only if  $X$ is similar to $-X$  via a symplectic involution in $\mathrm{Sp}(2n,\mathbb{C})$.
\end{theorem}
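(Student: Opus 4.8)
The plan is to reduce the statement about skew-Hamiltonian matrices to the already-established results about Hamiltonian matrices via a well-chosen bijection. The key observation is that $X \in \mathcal{SH}(2n,\C)$ means $X^T \mathrm{J}_{2n} = \mathrm{J}_{2n} X$, whereas $Y \in \mathfrak{sp}(2n,\C)$ means $Y^T \mathrm{J}_{2n} = -\mathrm{J}_{2n} Y$. A natural way to pass between the two is to note that if $X$ is skew-Hamiltonian, then $\mathrm{J}_{2n} X$ is a symmetric matrix, while if $Y$ is Hamiltonian, $\mathrm{J}_{2n} Y$ is skew-symmetric; there is no direct linear map between them, so instead I would work with the structure of $\mathcal{SH}(2n,\C)$ directly. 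A cleaner route: every skew-Hamiltonian matrix $X$ has the property that each of its Jordan blocks occurs with \emph{even} multiplicity — this is the classical structure theorem for skew-Hamiltonian matrices (see e.g. Faßbender–Mackey–Mackey–Xu). Thus I would first recall or prove this even-multiplicity fact.

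First I would establish that for $X \in \mathcal{SH}(2n,\C)$, every Jordan block $\mathrm{J}(\lambda,k)$ (for every eigenvalue $\lambda$, including $\lambda = 0$ and including even sizes) occurs with even multiplicity in the Jordan form of $X$. This is the standard canonical form result for skew-Hamiltonian matrices under symplectic similarity. Given this, the "only if" direction is immediate: if $X \in \mathcal{SH}(2n,\C)$ is similar to $-X$, then in particular $X$ satisfies conditions \eqref{cond-1-main-thm} and \eqref{cond-2-main-thm} of \thmref{thm-main-str-real-sp(2n,C)} automatically — but wait, those conditions are about Hamiltonian matrices, not skew-Hamiltonian ones. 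So I cannot invoke \thmref{thm-main-str-real-sp(2n,C)} directly; instead I need the analogous statement for $\mathcal{SH}(2n,\C)$. Here the point is that the even-multiplicity structure of skew-Hamiltonian matrices means that whenever $X \in \mathcal{SH}(2n,\C)$ is similar to $-X$ as abstract matrices (equivalently, the Jordan data of $X$ is invariant under $\lambda \mapsto -\lambda$), one can pair up blocks so as to build the conjugating involution \emph{inside} $\mathrm{Sp}(2n,\C)$.

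The main steps I would carry out are: (1) reduce to a single "symplectically indecomposable" skew-Hamiltonian summand using an orthogonal-with-respect-to-$\mathrm{J}_{2n}$ direct sum decomposition, analogous to the expanding sum formalism of \defref{def-expanding-sum}; by the even-multiplicity theorem each such summand is symplectically similar to a "doubled" block $\mathrm{J}(\lambda,k) \oplus \mathrm{J}(\lambda,k)$ sitting inside a $2k \times 2k$ symplectic space in a standard way. (2) For a doubled block with $\lambda \neq 0$: since $X$ similar to $-X$ forces the eigenvalue multiset to be symmetric under negation, the blocks with eigenvalue $\lambda$ pair with blocks with eigenvalue $-\lambda$; on such a paired chunk $\mathrm{J}(\lambda,k)^{\oplus 2} \oplus \mathrm{J}(-\lambda,k)^{\oplus 2}$ I would write down an explicit symplectic involution (built from a permutation-type matrix swapping the $\lambda$-part with the $-\lambda$-part, composed with the standard reversing matrix for a single Jordan block) conjugating it to its negative. (3) For a doubled nilpotent block $\mathrm{J}(0,k)^{\oplus 2}$: here the two copies of $\mathrm{J}(0,k)$ already give even multiplicity regardless of the parity of $k$, so the obstruction in condition \eqref{cond-1-main-thm} of \thmref{thm-main-str-real-sp(2n,C)} — odd-size nilpotent blocks with odd multiplicity — never arises; I would exhibit a symplectic involution handling $\mathrm{J}(0,k)^{\oplus 2}$, using that $\mathrm{J}(0,k)$ is always similar to $-\mathrm{J}(0,k)$ via an involution in $\mathrm{GL}(k,\C)$ and that the "doubling" lets one symmetrize this into $\mathrm{Sp}$. (4) Assemble the per-summand involutions into a global symplectic involution via the expanding-sum construction, and check $h^2 = \mathrm{I}_{2n}$ and $h \in \mathrm{Sp}(2n,\C)$ block by block.

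The hard part will be step (3), the nilpotent case — specifically verifying that the doubling genuinely removes the parity obstruction that is present for honest Hamiltonian matrices in \thmref{thm-main-str-real-sp(2n,C)}\eqref{cond-1-main-thm}, and producing the explicit symplectic involution on $\mathrm{J}(0,k)^{\oplus 2}$ with the correct $\mathrm{J}_{2n}$-compatibility. The subtlety is that the natural symplectic form on the doubled space is the hyperbolic pairing between the two copies of $\C^k$, and the involution must respect this pairing; I expect one must take $h$ of the block form $\begin{psmallmatrix} 0 & S \\ S^{-T} & 0 \end{psmallmatrix}$ for a suitable $S \in \mathrm{GL}(k,\C)$ with $S \cdot (-\mathrm{J}(0,k)^T) = \mathrm{J}(0,k) \cdot S$ (so that conjugation sends $X$ to $-X$) and with $S = S^{-T}$ or an appropriate compatibility forcing $h^2 = \mathrm{I}$. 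Choosing $S$ to be (a signed version of) the anti-diagonal exchange matrix should work, but the sign bookkeeping relating the skew-Hamiltonian condition, the reversing condition, and the involution condition simultaneously is where the real care is needed. Once the indecomposable cases are settled, the "if" direction is immediate and the "only if" direction follows since similarity over $\mathrm{GL}(2n,\C)$ is implied by symplectic similarity.
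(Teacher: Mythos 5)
Your overall strategy is the same as the paper's: put $X$ into its symplectic canonical form (doubled blocks $\mathrm{J}(\lambda,k)\oplus\mathrm{J}(\lambda,k)^{T}$, which is exactly the even-multiplicity structure you invoke), observe that similarity of skew-Hamiltonian matrices is the same as symplectic similarity, pair the $\lambda$-summands with the $-\lambda$-summands for $\lambda\neq 0$ via a swap-type symplectic involution, and assemble everything by expanding sums. Steps (1), (2) and (4) of your outline match the paper's Lemma \ref{lem-canonical-form-skew-Hamil-Cruz}, Lemma \ref{lem-skew-sim-neg-class} and the construction of $g_{\lambda}$ in \eqref{eq-conj-skew-form}, and are fine.

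The genuine gap is exactly where you predicted it, in step (3), and the Ansatz you commit to there does not work. You propose to reverse the nilpotent summand (in the symplectic basis this is $\mathrm{J}(0,k)\oplus\mathrm{J}(0,k)^{T}$, the two copies of $\C^k$ being a hyperbolic Lagrangian pair) by an off-diagonal, Lagrangian-swapping matrix $h=\begin{psmallmatrix}0&S\\ \ast&0\end{psmallmatrix}$. First, the symplectic condition forces the lower-left block to be $-S^{-T}$ (your written form $\begin{psmallmatrix}0&S\\S^{-T}&0\end{psmallmatrix}$ is not in $\mathrm{Sp}(2k,\C)$). More seriously, once $h=\begin{psmallmatrix}0&S\\-S^{-T}&0\end{psmallmatrix}$ is symplectic, $h^2=\mathrm{I}_{2k}$ forces $S^{T}=-S$; an invertible skew-symmetric $S$ exists only when $k$ is even, so for odd $k$ no symplectic involution of your proposed shape exists at all, and no choice of ``signed anti-diagonal'' $S$ can rescue it. (Such off-diagonal matrices do give symplectic \emph{skew}-involutions, which is what the paper exploits in Lemma \ref{lem-real-non-nil-skew-construct}, but not involutions.) The fix is to abandon the off-diagonal shape for the nilpotent piece: with $\sigma=\mathrm{diag}(1,-1,\dots,(-1)^{k-1})$ one has $\sigma\mathrm{J}(0,k)=-\mathrm{J}(0,k)\sigma$, and the block-diagonal matrix $\sigma\oplus\sigma=\begin{psmallmatrix}\sigma&0\\0&\sigma\end{psmallmatrix}$ is a symplectic involution reversing $\mathrm{J}(0,k)\oplus\mathrm{J}(0,k)^{T}$ for every $k$ (note also that no pairing of two nilpotent summands is needed, since each single summand is already similar to its negative). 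This is precisely the involution $g_0$ in \eqref{eq-conj-skew-form}; with that replacement your argument goes through.
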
 

\textbf{Structure of the paper.}
In Section \ref{sec-prelim}, we recall some background and preliminary results. We investigate adjoint reality in $\mathfrak{sp}(2n,\mathbb{C})$ and prove  \thmref{thm-real-skew-inv}  in Section \ref{sec-adjoint-real}. Section \ref{sec-st-real} addresses the classification of strongly adjoint real elements in $\mathfrak{sp}(2n,\mathbb{C})$, and we prove our main result, Theorem \ref{thm-main-str-real-sp(2n,C)}. Finally,  we prove \thmref{thm-class-skew-symp}  in  Section \ref{sec-skew-class}.

\section{Preliminaries} \label{sec-prelim}
In this section, we fix some notation and recall some necessary background that will be used throughout this paper. 
For $ A\in {\M}(n, \C) $, let $A^T$ denote the {\it transpose} of the matrix $ A $.
First, we recall some primary results related to the symplectic Lie group $\mathrm{Sp}(2n,\mathbb{C})$ and its Lie algebra $\mathfrak{sp}(2n,\mathbb{C})$. 
Recall that the elements of $ \mathrm{Sp}(2n,\C) $ and $\mathfrak{sp}(2n,\C)$ are  known  as {symplectic} and Hamiltonian  matrices, respectively.   
The following observations immediately follow from the definitions and provide a criterion for checking whether a matrix is symplectic or Hamiltonian.
\begin{enumerate}[label = {({\bf  Ob.\arabic*})}]
	\item An element $g:=\begin{psmallmatrix}
		g_{1}	& g_{2}\\
		g_{3} & g_{4} 
	\end{psmallmatrix}\in \mathrm{Sp}(2n,\mathbb{C})$  if and only if  $g_{1}g_{4}^{T} -g_{2}g_{3} ^{T}= \mathrm{I}_{n}$ and both $g_{1}g_{2}^{T}$ and $g_{3}g_{4}^{T}$ are symmetric matrices.
	
	\item  An element 
	$A:=\begin{psmallmatrix}
		A_{1}	& A_{2}\\
		A_{3} & A_{4} 
	\end{psmallmatrix}\in \s\p(2n, \C)$  if and only if $A_{1} = -A_{4}^{T}$ and both $A_2$ and $A_3$ are symmetric matrices.
\end{enumerate} 

In particular,  for any  $g \in \mathrm{GL}(n,\mathbb{C})$,
$\begin{psmallmatrix}
	& g\\
	-({g}^{T})^{-1} & 
\end{psmallmatrix}$ and $\begin{psmallmatrix}
	g & \\
	& ({g}^{T})^{-1}
\end{psmallmatrix}$ are symplectic matrices in $\mathrm{Sp}(2n,\mathbb{C})$. 
The following result will be used to understand a suitable canonical form of the elements in $ \mathfrak{sp}(2n,\mathbb{C})$.
\begin{lemma}[cf.~{\cite[Corollary 22]{HM}}] 
	\label{lem-equi-symp-sim}
	Let $A$ and $B$ be both either symplectic or Hamiltonian matrices. Then $A$ and $B$ are similar if and only if $A$ and $B$ are symplectically similar.
\end{lemma}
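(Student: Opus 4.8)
The forward direction is immediate: $\mathrm{Sp}(2n,\C)\subseteq\GL(2n,\C)$, so symplectic similarity is in particular similarity. The entire content is the converse, and the plan is to realize $\mathrm{Sp}(2n,\C)$ as the fixed-point subgroup of an involutive automorphism of $\GL(2n,\C)$ and then show that the set of matrices implementing a given similarity is stable under this involution, so that it must contain a fixed (hence symplectic) point. Concretely, write $\mathrm{J}=\mathrm{J}_{2n}$ and define $\sigma\colon\GL(2n,\C)\to\GL(2n,\C)$ by $\sigma(P)=\mathrm{J}^{-1}(P^{-1})^T\mathrm{J}$. A direct check gives $\sigma^2=\mathrm{id}$ and $\sigma(PQ)=\sigma(P)\sigma(Q)$, and by the defining relation $P^T\mathrm{J}P=\mathrm{J}$ one has $P\in\mathrm{Sp}(2n,\C)\iff\sigma(P)=P$; thus $\mathrm{Sp}(2n,\C)$ is exactly the fixed-point group of $\sigma$.

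Suppose $A,B$ are both Hamiltonian (the symplectic case is identical) and $P_0AP_0^{-1}=B$ for some $P_0\in\GL(2n,\C)$; set $C=\{P:PAP^{-1}=B\}=P_0\,Z_{\GL(2n,\C)}(A)$. Using the Hamiltonian identities $\mathrm{J}A\mathrm{J}^{-1}=-A^T$ and $\mathrm{J}B\mathrm{J}^{-1}=-B^T$, one computes for $P\in C$
\[ \sigma(P)A\sigma(P)^{-1}=\mathrm{J}^{-1}(P^{-1})^T(\mathrm{J}A\mathrm{J}^{-1})P^T\mathrm{J}=-\mathrm{J}^{-1}(PAP^{-1})^T\mathrm{J}=-\mathrm{J}^{-1}B^T\mathrm{J}=B, \]
so $\sigma(C)=C$. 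Hence $\sigma$ is an involution preserving the nonempty coset $C$, and it only remains to produce a $\sigma$-fixed element of $C$. Writing a general element of $C$ as $P_0z$ with $z\in Z:=Z_{\GL(2n,\C)}(A)$, the condition $\sigma(P_0z)=P_0z$ becomes $\sigma(z)z^{-1}=d$, where $d:=\sigma(P_0)^{-1}P_0\in Z$ satisfies $\sigma(d)=d^{-1}$; moreover $\sigma$ restricts to an involutive automorphism of the connected complex algebraic group $Z$.

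Everything therefore reduces to the twisted descent problem: given $d\in Z$ with $\sigma(d)=d^{-1}$, solve $\sigma(z)z^{-1}=d$ for $z\in Z$. This is the step I expect to be the main obstacle. In the prototype where $Z$ is abelian it is solved at once by passing to the Lie algebra: writing $d=\exp\delta$, the relation $\sigma(d)=d^{-1}$ forces $\delta$ to lie in the $(-1)$-eigenspace of the differential of $\sigma$, and then $z=\exp(-\tfrac12\delta)$ satisfies $\sigma(z)z^{-1}=d$. In general I would resolve the descent in one of two ways. Abstractly, one appeals to the structure theory of the symmetric pair $(Z,Z^{\sigma})$ over the algebraically closed field $\C$, using connectedness of $Z$ to force the relevant (nonabelian) $H^1$ obstruction to vanish. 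More concretely, one first splits $\C^{2n}$ into the generalized eigenspaces of $A$: the form $\omega(x,y)=x^T\mathrm{J}y$ pairs $V_\lambda$ with $V_{-\lambda}$ and is nondegenerate on $V_0$, so on each hyperbolic pair $V_\lambda\oplus V_{-\lambda}$ with $\lambda\neq0$ the symplectic structure is the tautological one attached to the $\GL$-duality and any similarity is rectified to an isometry by Witt's theorem, while on the self-paired nilpotent block $V_0$ one invokes the classification of nilpotent Hamiltonian operators over $\C$, whose isometry type is determined by the Jordan partition. Since $A$ and $B$ share the same Jordan data, assembling the block isometries yields the required symplectic $z$, and hence $A$ and $B$ are symplectically similar. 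The symplectic case runs verbatim, using instead $\mathrm{J}A\mathrm{J}^{-1}=(A^{-1})^T$ and pairing $V_\lambda$ with $V_{1/\lambda}$.
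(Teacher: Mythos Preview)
The paper does not prove this lemma; it is imported from \cite[Corollary~22]{HM}, where it is deduced from symplectic canonical forms: once one knows every Hamiltonian (or symplectic) matrix is symplectically similar to a normal form determined by its ordinary Jordan data, two similar such matrices share the same form and hence are symplectically similar. Your approach is genuinely different---you try to bypass canonical forms entirely by a descent argument in the centralizer---and, once completed, it is arguably cleaner.

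Your reduction to the equation $\sigma(z)z^{-1}=d$ in $Z$ with $\sigma(d)=d^{-1}$ is correct. The gap is in how you solve it. The abstract route (``connectedness of $Z$ forces $H^1$ to vanish'') is not a general theorem: take $\sigma$ acting trivially on a torus to see $H^1$ need not vanish for connected groups. The concrete route invokes ``the classification of nilpotent Hamiltonian operators over $\C$, whose isometry type is determined by the Jordan partition''---but that classification \emph{is} the symplectic Jordan form on the nilpotent part, so you have circled back to the Horn--Merino argument.

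There is a one-line fix that stays inside your framework. Rewrite $\sigma$ via the algebra anti-involution $\tau(X)=\mathrm{J}_{2n}^{-1}X^{T}\mathrm{J}_{2n}$, so $\sigma(P)=\tau(P)^{-1}$ and the condition $\sigma(d)=d^{-1}$ reads $\tau(d)=d$. Since $d$ is invertible over $\C$, choose a square root $z=d^{1/2}$ that is a \emph{polynomial} in $d$ (Hermite interpolation on the spectrum); then $z\in Z$ because $Z$ is the unit group of the algebra $\mathcal{Z}(A)$, and $\tau(z)=z$ because $\tau$ fixes every power of $d$. Now $\sigma(z^{-1})\cdot(z^{-1})^{-1}=\tau(z^{-1})^{-1}\cdot z=z\cdot z=d$, so $w=z^{-1}$ solves the descent and $P_0w\in\mathrm{Sp}(2n,\C)$ is the desired conjugator. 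In fact your ``abelian prototype'' already does this in exponential form: taking $\delta=\log d$ as a polynomial in $d$ forces $\sigma_*\delta=-\tau(\delta)=-\delta$, and then $\exp(\tfrac12\delta)\exp(\tfrac12\delta)=\exp(\delta)$ needs no commutativity since only powers of a single element appear.
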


Let $P \oplus Q:=\begin{psmallmatrix}
	P	& \\
	& Q
\end{psmallmatrix}  \in \mathrm{M}(m+n,\C)$ denote the direct sum of  the matrices $P \in \mathrm{M}(m,\C)$ and  $Q \in \mathrm{M}(n,\C)$. In the following definition, we recall the notion of the expanding sum of matrices; see \cite[page 385]{Cr}.
\begin{definition}[{Expanding sum of matrices}] \label{def-expanding-sum}
	Let $A=\begin{psmallmatrix}
		A_{1}	& A_{2}\\
		A_{3} & A_{4} 
	\end{psmallmatrix}$ and $B=\begin{psmallmatrix}
		B_{1}	& B_{2}\\
		B_{3} & B_{4} 
	\end{psmallmatrix}$, where $A_{i} \in \mathrm{M}(m,\C)$ and  $B_{i} \in \mathrm{M}(n,\C)$  for all $1\leq i \leq 4$. Then the \textit{expanding sum} of matrices $A$ and $B$ is defined as follows
	\begin{equation}\label{eq-expanding-sum}
		A \boxplus B :=  \begin{pmatrix}
			A_{1} \oplus B_{1}	& A_{2} \oplus B_{2}\\
			A_{3} \oplus B_{3} & A_{4} \oplus B_{4} 
		\end{pmatrix}.
	\end{equation}
\end{definition}

Observe that $\mathrm{I}_{2m}  \boxplus \mathrm{I}_{2n}  = \mathrm{I}_{2m +2n}$ and  $\mathrm{J}_{2m}   \boxplus \mathrm{J}_{2n} = \mathrm{J}_{2m +2n}$. For $A \in \mathrm{M}(n,\C)$, the  \textit{spectrum} $\sigma (A)$  denotes the set of eigenvalues of $A$, and the {\it centralizer} $\mathcal{Z}(A)$ of $A$  is defined as follows
\begin{equation}\label{eq-def-centralizer}
	\mathcal{Z}(A):= \{ B \in     \mathrm{M}(n,\C) \mid AB =BA   \}.
\end{equation}
Let $G\subset \mathrm{M}(n,\C) $ be a group, and $A\in G$. Then the centralizer $\mathcal{Z}_G(A)$ of $A$ in $G$ is defined as $\mathcal{Z}_G(A): =\mathcal{Z}(A)\, \cap\, G$.
Next, we recall some useful properties of the expanding sum of matrices; see  \cite{Cr, CP}. 
\begin{lemma}[{\cite[Lemma 4]{CP}}] \label{lem-prop-exp-sum}
	Let $A \in \mathrm{M}(m,\C)$ and  $B \in \mathrm{M}(n,\C)$.  Then the following statements hold.
	\begin{enumerate}
		\item The matrix $A \boxplus B$ is symplectic (resp. Hamiltonian)  if and only if both the matrices  $A$ and $B$ are symplectic (resp. Hamiltonian).
		\item The matrix $A \oplus B$ is similar to $A \boxplus B$ and $B \boxplus A$.
		
		\item $(A \boxplus B)^{T} = A^{T} \boxplus B^{T}$ and $(A \boxplus B)^{-1} = A^{-1} \boxplus B^{-1}$.
		
		\item Let $C \in \mathrm{M}(m,\C)$ and  $D \in \mathrm{M}(n,\C)$. Then $(A \boxplus B)(	C \boxplus D) = 	AC \boxplus BD$.
		\item Let $\sigma (A) \cap \sigma (B) = \emptyset$ and $f \in \mathcal{Z} (A \boxplus B)$. Then $f = f_{1} \boxplus f_{2}$, where $f_{1} \in \mathcal{Z} (A)$ and $f_{2} \in \mathcal{Z} (B)$.
	\end{enumerate}
\end{lemma}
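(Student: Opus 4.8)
The plan is to derive all five statements from the single block-multiplication identity that underlies the expanding sum, together with the two observations recorded just before the lemma, namely $\mathrm{I}_{2m}\boxplus\mathrm{I}_{2n}=\mathrm{I}_{2m+2n}$ and $\mathrm{J}_{2m}\boxplus\mathrm{J}_{2n}=\mathrm{J}_{2m+2n}$. Throughout I shall invoke three elementary facts about the ordinary direct sum: $(P\oplus Q)(R\oplus S)=PR\oplus QS$, $(P\oplus Q)^{T}=P^{T}\oplus Q^{T}$, and the linearity $c(P\oplus Q)=(cP)\oplus(cQ)$. I shall also use that the assignment $(P,Q)\mapsto P\boxplus Q$ is injective and linear, so that $P_1\boxplus Q_1=P_2\boxplus Q_2$ forces $P_1=P_2$ and $Q_1=Q_2$ (each block of $P$ and of $Q$ occupies a fixed position inside the expanding sum).

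I would begin with statement $(4)$, since it is the engine for the rest. Writing $A,B,C,D$ in their $2\times2$ block form and multiplying block by block, each block of the product $(A\boxplus B)(C\boxplus D)$ is a sum of two terms of the shape $(P\oplus Q)(R\oplus S)=PR\oplus QS$; collecting these and comparing with the block formula for ordinary matrix multiplication identifies the result with $AC\boxplus BD$. Statement $(3)$ then splits into two halves. The transpose identity is a direct block computation using $(P\oplus Q)^{T}=P^{T}\oplus Q^{T}$ together with the fact that transposition swaps the two off-diagonal blocks. The inverse identity follows from $(4)$: by $\mathrm{I}_{2m}\boxplus\mathrm{I}_{2n}=\mathrm{I}_{2m+2n}$ one has $(A\boxplus B)(A^{-1}\boxplus B^{-1})=AA^{-1}\boxplus BB^{-1}=\mathrm{I}_{2m+2n}$, and likewise on the other side.

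For statement $(1)$ I would impose the defining relation directly on $A\boxplus B$. Using $(3)$, $(4)$ and $\mathrm{J}_{2m}\boxplus\mathrm{J}_{2n}=\mathrm{J}_{2m+2n}$, one gets $(A\boxplus B)^{T}\mathrm{J}_{2m+2n}(A\boxplus B)=(A^{T}\mathrm{J}_{2m}A)\boxplus(B^{T}\mathrm{J}_{2n}B)$, so by injectivity of $\boxplus$ this equals $\mathrm{J}_{2m}\boxplus\mathrm{J}_{2n}$ if and only if $A^{T}\mathrm{J}_{2m}A=\mathrm{J}_{2m}$ and $B^{T}\mathrm{J}_{2n}B=\mathrm{J}_{2n}$, i.e.\ iff both $A$ and $B$ are symplectic. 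The Hamiltonian case is identical, comparing $(A^{T}\mathrm{J}_{2m})\boxplus(B^{T}\mathrm{J}_{2n})$ with $(-\mathrm{J}_{2m}A)\boxplus(-\mathrm{J}_{2n}B)$ and separating by injectivity.

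The remaining two statements rest on an explicit permutation. For $(2)$, let $P$ be the permutation matrix that reorders the $2(m+n)$ coordinates by grouping the first block of $A$, then the first block of $B$, then the second block of $A$, then the second block of $B$; conjugation by $P$ carries the block-diagonal matrix $A\oplus B$ to the interleaved matrix $A\boxplus B$, giving $A\oplus B\sim A\boxplus B$, and exchanging the roles of $A,B$ yields $A\oplus B\sim B\boxplus A$. Finally, statement $(5)$ combines this with the disjoint-spectra hypothesis: with $P$ as above satisfying $P(A\oplus B)P^{-1}=A\boxplus B$, membership $f\in\mathcal{Z}(A\boxplus B)$ is equivalent to $P^{-1}fP\in\mathcal{Z}(A\oplus B)$. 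Since $\sigma(A)\cap\sigma(B)=\emptyset$, the Sylvester equation $AX=XB$ has only the trivial solution, so every matrix commuting with $A\oplus B$ is block diagonal, say $P^{-1}fP=f_1\oplus f_2$ with $f_1\in\mathcal{Z}(A)$ and $f_2\in\mathcal{Z}(B)$; conjugating back by $P$ returns $f=P(f_1\oplus f_2)P^{-1}=f_1\boxplus f_2$. The main obstacle here is purely bookkeeping, namely pinning down the permutation $P$ precisely and verifying that it intertwines $\oplus$ with $\boxplus$, after which the Sylvester argument for $(5)$ is entirely standard.
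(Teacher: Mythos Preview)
The paper does not supply its own proof of this lemma; it is simply quoted from \cite[Lemma~4]{CP}. So there is nothing to compare against, and the relevant question is whether your argument stands on its own.

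It does. Your ordering is sensible: establishing $(4)$ first via block multiplication is the right engine, and $(3)$, $(1)$ then fall out exactly as you describe using the observations $\mathrm{I}_{2m}\boxplus\mathrm{I}_{2n}=\mathrm{I}_{2m+2n}$ and $\mathrm{J}_{2m}\boxplus\mathrm{J}_{2n}=\mathrm{J}_{2m+2n}$ together with the evident injectivity of $(P,Q)\mapsto P\boxplus Q$. For $(2)$ your permutation is the block swap sending the row/column blocks of sizes $(m,m,n,n)$ to the order $(m,n,m,n)$; it is worth writing this explicitly as the permutation matrix $P=\begin{psmallmatrix}\mathrm{I}_m&&&\\&&\mathrm{I}_m&\\&\mathrm{I}_n&&\\&&&\mathrm{I}_n\end{psmallmatrix}$ (or its transpose, depending on which side you conjugate), since you reuse it in $(5)$. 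The reduction in $(5)$ to the classical fact that $\sigma(A)\cap\sigma(B)=\emptyset$ forces every element of $\mathcal{Z}(A\oplus B)$ to be block diagonal (via Sylvester's equation $AX-XB=0$) is correct, and conjugating back by $P$ indeed converts $f_1\oplus f_2$ into $f_1\boxplus f_2$.

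One cosmetic point: the lemma statement writes $A\in\mathrm{M}(m,\C)$ while the definition of $\boxplus$ and the identities you invoke are phrased for matrices of even size $2m$; just make clear at the outset which convention you are using so that the symbols $\mathrm{I}_{2m}$, $\mathrm{J}_{2m}$ in your argument match the size of $A$.
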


If $A$ and $B$ are Hamiltonian matrices,  then using  \lemref{lem-equi-symp-sim} and \lemref{lem-prop-exp-sum}, it follows that $A \boxplus B$ is symplectically similar to  $B \boxplus A$. Similarly,
if $A$ and $B$ are symplectic involutions (resp. skew-involutions), then $A \boxplus B$ is a symplectic involution (resp. skew-involution).

\subsection{Canonical forms of matrices in  $ \mathfrak{sp}(2n,\mathbb{C})$  under symplectic similarity}\label{subsec-canonical-form}

Let $ \mathrm{J}(\lambda, m)$  denote the \textit{Jordan block} of size $m$ corresponding to eigenvalue $ \lambda  \in \C$, and it is defined as a square matrix of order $m$
with $ \lambda $ on the diagonal entries, $1$ on all of the super-diagonal entries, and $0$ elsewhere. We will refer to a block diagonal matrix in  $\mathrm{M}(n,\C)$ where each block is a Jordan block as  \textit{Jordan form}. Recall that every matrix in  $\mathrm{M}(n,\C)$ is similar (or conjugate) to a Jordan form, which is unique up to a permutation of Jordan blocks. The Jordan canonical form of symplectic and Hamiltonian matrices are studied in literature; see \cite{LMX}, \cite[Theorem 4, Theorem 5]{CMP}.

Since adjoint reality is invariant under conjugation, it is sufficient to work with suitable symplectic similar canonical forms of matrices in  $ \mathfrak{sp}(2n,\mathbb{C})$.
In the following lemma, we recall a canonical form of Hamiltonian matrices in $ \mathfrak{sp}(2n,\mathbb{C})$  under \textit{symplectic similarity}, called the \textit{symplectic Jordan form} of  Hamiltonian matrices; see \cite{Cr}.
\begin{lemma}[{\cite[Lemma 6]{Cr}}]\label{lem-canonical-form-Cruz}
	Each Hamiltonian matrix is symplectically similar to the expanding sum of matrices of the form
	\begin{equation}
		\mathrm{J}(\lambda, k) \oplus - \mathrm{J}(\lambda, k)^{T}  \quad (\lambda \in \C), \hbox{ and }  \begin{pmatrix}
			\mathrm{J}(0, l) & E_{ll}\\
			& - \mathrm{J}(0, l)^{T}
		\end{pmatrix},
	\end{equation}
	where $E_{ll} \in \mathrm{M}(l, \C)$ such that $(l, l)^{th}$ entry of $E_{ll}$ is one and all others entries are zero.
\end{lemma}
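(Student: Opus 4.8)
The plan is to forget the matrix $\mathrm{J}_{2n}$ and work with the symplectic form $\langle u,v\rangle := u^T \mathrm{J}_{2n} v$ on $\C^{2n}$: a matrix $X$ is Hamiltonian precisely when it is skew-adjoint, $\langle Xu,v\rangle=-\langle u,Xv\rangle$ for all $u,v$. The statement is then a structure theorem for a skew-adjoint operator on the symplectic space $(\C^{2n},\langle\,\cdot\,,\cdot\,\rangle)$, and by \lemref{lem-equi-symp-sim} it suffices to produce an $\langle\,\cdot\,,\cdot\,\rangle$-orthogonal, $X$-invariant decomposition into symplectic subspaces on each of which, in a standard symplectic basis, $X$ acts by one of the two listed blocks; translating "orthogonal direct sum of symplectic subspaces in standard bases'' into "expanding sum of matrices'' is exactly the bookkeeping in \lemref{lem-prop-exp-sum} together with $\mathrm{J}_{2m}\boxplus\mathrm{J}_{2n}=\mathrm{J}_{2m+2n}$. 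First I would split off eigenvalues: writing $\C^{2n}=\bigoplus_\lambda V_\lambda$ for the generalized eigenspaces, skew-adjointness forces $\langle V_\lambda,V_\mu\rangle=0$ unless $\mu=-\lambda$ (push powers of $X-\lambda$ across the form). Hence $\sigma(X)$ is symmetric under negation, $\langle\,\cdot\,,\cdot\,\rangle$ is nondegenerate on $V_0$ and on each $V_\lambda\oplus V_{-\lambda}$ with $\lambda\neq0$, these pieces are mutually orthogonal and $X$-invariant, and it is enough to treat each.

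For $\lambda\neq0$ the argument is short. Choose a Jordan basis of $V_\lambda$ making $X|_{V_\lambda}=\bigoplus_i\mathrm{J}(\lambda,k_i)=:A$, and let $\{v_j\}\subset V_{-\lambda}$ be the basis dual to it via $\langle\,\cdot\,,\cdot\,\rangle$ (possible since the form pairs $V_\lambda$ and $V_{-\lambda}$ perfectly). A one-line computation from $\langle u_i,Xv_j\rangle=-\langle Xu_i,v_j\rangle$ shows the matrix of $X|_{V_{-\lambda}}$ in this dual basis is $-A^T$, while $\langle V_{\pm\lambda},V_{\pm\lambda}\rangle=0$ makes the Gram matrix in the combined basis equal to $\mathrm{J}_{2(\sum k_i)}$. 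Thus on $V_\lambda\oplus V_{-\lambda}$ the operator $X$ is the Hamiltonian matrix $A\oplus(-A^T)$; by \lemref{lem-prop-exp-sum}(2) this is similar, and by \lemref{lem-equi-symp-sim} symplectically similar, to $\boxplus_i\big(\mathrm{J}(\lambda,k_i)\oplus-\mathrm{J}(\lambda,k_i)^T\big)$, the first listed block.

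The real work is the nilpotent part $N:=X|_{V_0}$, a nilpotent skew-adjoint operator on the symplectic space $V_0$; I would induct on $\dim V_0$, peeling off one indecomposable piece at a time. Pick $w$ with $N^m w=0\neq N^{m-1}w$ and $m$ maximal, and set $C:=\langle w,Nw,\dots,N^{m-1}w\rangle$. The scalars $c_s:=\langle w,N^s w\rangle$ control the geometry: skew-symmetry of the form together with the identity $\langle N^s w,w\rangle=(-1)^s\langle w,N^s w\rangle$ forces $c_s=0$ for every even $s$, and $c_s=0$ for $s\geq m$, so the Gram matrix of $C$ is anti-triangular with antidiagonal entries $\pm c_{m-1}$. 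Exactly two cases arise. If $c_{m-1}\neq0$, then $m-1$ is odd, so $m=2l$ is even, $C$ is a nondegenerate symplectic subspace, and a rescaling of the cyclic basis puts $N|_C$ into the second listed block $\begin{psmallmatrix}\mathrm{J}(0,l)&E_{ll}\\&-\mathrm{J}(0,l)^T\end{psmallmatrix}$; one then passes to the orthogonal complement $C^\perp$, again symplectic and $N$-invariant. If $c_{m-1}=0$, then $\langle\,\cdot\,,\cdot\,\rangle|_C$ is degenerate, and I would construct a partner cyclic subspace $C'=\langle w',\dots,N^{m-1}w'\rangle$ of the same length so that $C$ and $C'$ are isotropic and dual; the dual-basis computation from paragraph two (now with $\lambda=0$) then makes $N$ act on $C\oplus C'$ as $\mathrm{J}(0,m)\oplus-\mathrm{J}(0,m)^T$, the first block with $\lambda=0$, after which one peels this piece off and inducts on its complement.

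The main obstacle is precisely this degenerate case, namely producing the partner generator $w'$. One starts from a vector $u$ with $\langle N^{m-1}w,u\rangle\neq0$ (available by nondegeneracy of $\langle\,\cdot\,,\cdot\,\rangle$ on $V_0$) and must correct it by elements of $C$ and of lower $N$-strings so that the resulting $w'$ satisfies $N^m w'=0$, spans a cyclic space meeting $C$ trivially, renders both $C$ and $C'$ isotropic with unimodular mutual pairing, and keeps $C\oplus C'$ nondegenerate so the induction continues on its orthogonal complement. This is a finite but delicate symplectic Gram–Schmidt adjustment — the linear-algebra core of the classification of nilpotent symplectic orbits — and it is exactly where the parity phenomenon becomes visible: odd-size Jordan blocks are forced to occur in dual pairs (first block type), whereas even-size blocks may occur singly (second block type). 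Once all indecomposable pieces are assembled by expanding sums via \lemref{lem-prop-exp-sum} and \lemref{lem-equi-symp-sim}, the stated symplectic Jordan form results.
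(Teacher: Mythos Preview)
The paper does not prove this lemma; it is quoted from \cite[Lemma~6]{Cr} and used as a black box, so there is no in-paper argument to compare against.  Your outline is the standard structure-theoretic derivation of the symplectic Jordan form (the route taken, e.g., in \cite{LMX}): reduce to generalized eigenspaces using skew-adjointness, handle each nonzero pair $\{\lambda,-\lambda\}$ by pairing $V_\lambda$ with $V_{-\lambda}$ via dual bases, and peel off cyclic pieces from the nilpotent part by induction on dimension.  The strategy is sound, and you are right that the degenerate nilpotent step is the crux.

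One imprecision is worth flagging.  In your case $c_{m-1}=0$ you write that the adjustment ``renders both $C$ and $C'$ isotropic,'' but $C=\langle w,Nw,\dots,N^{m-1}w\rangle$ is already fixed once $w$ is chosen and need not be isotropic.  For instance with $m=3$ one has $c_{m-1}=c_2=0$ automatically (even index), yet $c_1=\langle w,Nw\rangle$ may be nonzero, so $C$ is degenerate but not isotropic; and replacing $w$ by $w+aNw+bN^2w$ leaves $c_1$ unchanged.  Modifying $w'$ alone cannot repair this.  One must either also perturb $w$ by vectors coming from \emph{other} Jordan strings (which exist, since an odd-dimensional $C$ cannot exhaust the even-dimensional symplectic space $V_0$), or else drop the isotropy requirement and argue directly that a suitably corrected $2m$-dimensional span $C+C'$ is nondegenerate with $N$ acting as $\mathrm{J}(0,m)\oplus -\mathrm{J}(0,m)^T$ in an appropriate basis.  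Either route works and is, as you say, finite but delicate; your sketch just conflates the two.
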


For $l \in \N$, define $\Delta_{2l}:= \begin{psmallmatrix}
	\mathrm{J}(0, l) & E_{ll}\\
	& - \mathrm{J}(0, l)^{T}
\end{psmallmatrix} $, and  $\Lambda_{2l} := \begin{psmallmatrix}
	\mathrm{J}(0, l) & \mathrm{I}_{l} \\
	& - \mathrm{J}(0, l)^{T}
\end{psmallmatrix} $. Note that $\Delta_{2l}$ and $\Lambda_{2l} $ both are Hamiltonian matrices   in $\mathfrak{sp}(2l,\C)$ similar to  $\mathrm{J}(0, 2l)$. In view of \lemref{lem-equi-symp-sim}, $\Delta_{2l}$ is symplectically similar to $\Lambda_{2l} $ for each $l \in \N$.  The next result follows from  \lemref{lem-canonical-form-Cruz}.

\begin{proposition}\label{prop-modified-canonical-form}
	Each Hamiltonian matrix in $ \mathfrak{sp}(2n,\mathbb{C})$ is symplectically similar to the expanding sum of matrices of the form
	\begin{equation}
		\mathrm{J}(\lambda, k) \oplus - \mathrm{J}(\lambda, k)^{T}  \quad (\lambda \in \C), \hbox{ and }  \begin{pmatrix}
			\mathrm{J}(0, l) & \mathrm{I}_{l} \\
			& - \mathrm{J}(0, l)^{T}
		\end{pmatrix}.
	\end{equation}
\end{proposition}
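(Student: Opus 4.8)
The plan is to start from the symplectic Jordan form of \lemref{lem-canonical-form-Cruz} and refine it by trading each nilpotent block $\Delta_{2l}$ for the cleaner block $\Lambda_{2l}$ without leaving the symplectic similarity class. Indeed, \lemref{lem-canonical-form-Cruz} already gives that an arbitrary Hamiltonian $X \in \mathfrak{sp}(2n,\C)$ is symplectically similar to an expanding sum whose summands are exactly of the two types $\mathrm{J}(\lambda,k) \oplus -\mathrm{J}(\lambda,k)^{T}$ and $\Delta_{2l}$. The first type already matches the target, so the whole content of the proposition is to replace each occurrence of $\Delta_{2l}$ by $\Lambda_{2l}$ inside the expanding sum.

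The key block-level fact is that $\Delta_{2l}$ is symplectically similar to $\Lambda_{2l}$ for every $l \in \N$, which is already noted in the discussion preceding the statement: both are Hamiltonian matrices in $\mathfrak{sp}(2l,\C)$, and both have Jordan form equal to the single nilpotent block $\mathrm{J}(0,2l)$, so they are ordinarily similar and hence, by \lemref{lem-equi-symp-sim}, symplectically similar. If one wishes to justify the ordinary similarity from scratch, it reduces to the routine verification that $\Delta_{2l}$ and $\Lambda_{2l}$ are each nilpotent of maximal index $2l$ (equivalently, of rank $2l-1$), which is the only computation the argument requires; I regard this rank count as the (mild) main obstacle.

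It then remains to propagate this block-level similarity to the full expanding sum via \lemref{lem-prop-exp-sum}. Writing the Cruz form as $C_1 \boxplus \cdots \boxplus C_r$, I would choose for each index $i$ a symplectic $g_i$ with $g_i C_i g_i^{-1}$ equal to the desired target block, taking $g_i = \mathrm{I}$ on every summand of the first type and $g_i$ equal to the conjugator of the previous step whenever $C_i = \Delta_{2l}$. Then $g := g_1 \boxplus \cdots \boxplus g_r$ is symplectic by part (1) of \lemref{lem-prop-exp-sum}, while parts (3) and (4) give $g\,(C_1 \boxplus \cdots \boxplus C_r)\,g^{-1} = (g_1 C_1 g_1^{-1}) \boxplus \cdots \boxplus (g_r C_r g_r^{-1})$, where the two-summand identities of \lemref{lem-prop-exp-sum} are extended to finitely many summands by associativity of $\boxplus$. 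The right-hand side is exactly an expanding sum of the claimed form, and since symplectic similarity is transitive, $X$ is symplectically similar to it, which proves the proposition.
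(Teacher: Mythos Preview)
Your proposal is correct and follows essentially the same route as the paper: the paper also deduces the proposition from \lemref{lem-canonical-form-Cruz} together with the observation (recorded just before the statement) that $\Delta_{2l}$ and $\Lambda_{2l}$ are both Hamiltonian and both similar to $\mathrm{J}(0,2l)$, hence symplectically similar by \lemref{lem-equi-symp-sim}. Your write-up is simply more explicit than the paper about how \lemref{lem-prop-exp-sum} is used to transport the block-level symplectic similarity to the full expanding sum.
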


In this paper, we will work with the canonical form of elements in $ \mathfrak{sp}(2n,\mathbb{C})$ given in \propref{prop-modified-canonical-form}.

\subsection{Reversing symmetry group for  $ X \in \mathfrak{sp}(2n,\mathbb{C})$}\label{sec-rev-symm-gp}

Following the classical notion (see \cite{BR}, \cite[Section 2.1.4]{OS}), in this set-up,  we define the  \textit{reversing symmetry group} or \textit{extended centralizer} as follows. 
For an element $ X \in \mathfrak{sp}(2n,\mathbb{C})$, the {\it reverser} set is defined as 
$$ \mathcal{R}_{\mathrm{Sp}(2n,\mathbb{C})}(X) := \{ g \in \mathrm{Sp}(2n,\mathbb{C}) \mid gXg^{-1} = -X \}.$$
Define the \textit{reversing symmetry group}  $\mathcal{E}_{\mathrm{Sp}(2n,\mathbb{C})}(X) := \mathcal{Z}_{\mathrm{Sp}(2n,\mathbb{C})}(X) \cup \mathcal{R}_{\mathrm{Sp}(2n,\mathbb{C})}(X)$, where the centralizer  $\mathcal{Z}_{\mathrm{Sp}(2n,\mathbb{C})}(X)$ is  defined in \eqref{eq-def-centralizer}.
The set $\mathcal{R}_{\mathrm{Sp}(2n,\mathbb{C})}(X)$ of reversers (or reversing elements) for an $\mathrm{Ad}_{\mathrm{Sp}(2n,\mathbb{C})}$-real  element $X$ is a right coset of the centralizer $\mathcal{Z}_{\mathrm{Sp}(2n,\mathbb{C})}(X)$ of $X$. Thus, the \textit{reversing symmetry group}  $\mathcal{E}_{\mathrm{Sp}(2n,\mathbb{C})}(X)$ is a subgroup of $\mathrm{Sp}(2n,\mathbb{C})$ in which $\mathcal{Z}_{\mathrm{Sp}(2n,\mathbb{C})}(X)$ has index at most $2$.
Therefore, to find the reversing symmetry group $\mathcal{E}_{\mathrm{Sp}(2n,\mathbb{C})}(X)$ of an $\mathrm{Ad}_{\mathrm{Sp}(2n,\mathbb{C})}$-real element $ X \in \mathfrak{sp}(2n,\mathbb{C})$, it is enough to specify one reverser for symplectic Jordan form of $X$ that is not in the centralizer.  In \secref{sec-adjoint-real}, we will provide an explicit reverser for certain symplectic Jordan forms in $\mathfrak{sp}(2n,\mathbb{C})$.

\subsection{Preliminary results} We will recall some necessary well-known results in this subsection. The strongly $\mathrm{Ad}_{\mathrm{Sp}(2n,\mathbb{C})}$-real nilpotent  and $\mathrm{Ad}_{\mathrm{Sp}(2n,\mathbb{C})}$-real semisimple elements   in $ \mathfrak{sp}(2n,\mathbb{C})$   are classified in  \cite{GM1} and \cite{GM2}, respectively. 

\begin{lemma}[cf.~{\cite[Theorem 4.9]{GM1}}]\label{lem-str-real-nil-symp-alg}
A nilpotent element $X \in \mathfrak{sp}(2n,\mathbb{C})$ is strongly $\mathrm{Ad}_{\mathrm{Sp}(2n,\mathbb{C})}$-real  if and only if  every nilpotent Jordan block $ \mathrm{J}(0, 2m)$ of even size in the Jordan decomposition of $X$ has even multiplicity.
\end{lemma}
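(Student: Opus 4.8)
The plan is to prove both implications using the symplectic Jordan form of \propref{prop-modified-canonical-form} for sufficiency and a $\mathbb{Z}/2$-grading argument for necessity. For sufficiency, assume every even-size nilpotent block has even multiplicity. Since $X$ lies in $\mathfrak{sp}(2n,\C)$, the odd-size nilpotent blocks automatically occur with even multiplicity (the symplectic constraint, recovered below); together with the hypothesis, \emph{every} block size occurs with even multiplicity. Hence the Jordan type of $X$ is exactly that of an expanding sum $\boxplus_i\big(\mathrm{J}(0,k_i)\oplus-\mathrm{J}(0,k_i)^T\big)$, each summand contributing a pair of equal blocks. By \lemref{lem-prop-exp-sum} this expanding sum is Hamiltonian, and since it has the same Jordan type as $X$, \lemref{lem-equi-symp-sim} shows $X$ is symplectically similar to it. As strong $\mathrm{Ad}_{\mathrm{Sp}(2n,\C)}$-reality is preserved under symplectic similarity, it suffices to reverse each block $\mathrm{J}(0,k)\oplus-\mathrm{J}(0,k)^T$ by a symplectic involution and assemble.

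For a single block put $N=\mathrm{J}(0,k)$ and $S=\mathrm{diag}\big(1,-1,\dots,(-1)^{k-1}\big)$, so that $S^2=\mathrm{I}_k$ and $SNS^{-1}=-N$; transposing gives $SN^{T}S^{-1}=-N^{T}$. Then $h:=S\oplus S$ satisfies $h^2=\mathrm{I}_{2k}$, lies in $\mathrm{Sp}(2k,\C)$ by (Ob.1) (since $S\,S^{T}=\mathrm{I}_k$), and conjugates $\mathrm{J}(0,k)\oplus-\mathrm{J}(0,k)^T$ to its negative. Because an expanding sum of symplectic involutions is again a symplectic involution and reverses the corresponding expanding sum by \lemref{lem-prop-exp-sum}, the assembled involution reverses $X$, proving sufficiency.

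For necessity, suppose $h\in\mathrm{Sp}(2n,\C)$ with $h^2=\mathrm{I}_{2n}$ and $hXh^{-1}=-X$. Write $\omega(u,v)=u^{T}{\rm J}_{2n}v$ and $\C^{2n}=V_+\oplus V_-$ for the $\pm1$-eigenspaces of $h$; since $h$ preserves $\omega$, for $u\in V_+,\,v\in V_-$ one has $\omega(u,v)=\omega(hu,hv)=-\omega(u,v)$, so $V_+\perp V_-$ and each $V_\pm$ is a symplectic subspace. Fix an even block size $s=2m$ and let $W_s$ be the multiplicity space of the size-$s$ Jordan blocks of $X$, realized canonically from the subspaces $\ker X^{j}$ and $\mathrm{im}\,X^{j}$. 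Because $X$ and $-X$ have the same kernels and images of powers, $h$ preserves all these subspaces and hence induces an involution $\bar h$ on $W_s$. The form $\omega$ induces on $W_s$ the nondegenerate pairing $\langle u,v\rangle_s=\omega(u,X^{s-1}v)$; moving powers of $X$ across by skew-adjointness and using that $\omega$ is alternating gives $\langle u,v\rangle_s=(-1)^{s}\langle v,u\rangle_s$, so $\langle\cdot,\cdot\rangle_s$ is \emph{symmetric} for even $s$ (and alternating for odd $s$, which is exactly the classical constraint forcing odd blocks to have even multiplicity).

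The crux is to compare $\bar h$ with this form. Using $hXh^{-1}=-X$ and $h\in\mathrm{Sp}(2n,\C)$ one computes $\langle\bar h u,\bar h v\rangle_s=(-1)^{s-1}\langle u,v\rangle_s$, so for even $s$ the involution $\bar h$ \emph{anti-preserves} the nondegenerate symmetric form $\langle\cdot,\cdot\rangle_s$. Consequently both eigenspaces $W_s^{\pm}$ of $\bar h$ are isotropic, and since a nondegenerate symmetric form over $\C$ admits no isotropic subspace of dimension exceeding $\lfloor\dim W_s/2\rfloor$, the identity $\dim W_s^+ +\dim W_s^-=\dim W_s$ forces $\dim W_s$ to be even. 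As $\dim W_s$ is precisely the multiplicity of $\mathrm{J}(0,2m)$, necessity follows. I expect the main obstacle to be this necessity direction: isolating the correct invariant, namely checking that $\omega$ descends to a nondegenerate form on the multiplicity space with the stated symmetry and that the induced involution $\bar h$ is well defined and anti-preserves it. Once this bookkeeping is in place, the isotropy-parity argument is immediate.
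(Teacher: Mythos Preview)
The paper does not actually prove this lemma; it is recorded in the preliminaries as a citation to \cite[Theorem~4.9]{GM1}, so there is no in-paper argument to compare against directly.

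Your sufficiency argument is correct and coincides with what the paper does elsewhere: the reversing involution $S\oplus S$ for $\mathrm{J}(0,k)\oplus-\mathrm{J}(0,k)^T$ is precisely \lemref{lem-str-real-odd-nil}, and the assembly via expanding sums is the same mechanism used throughout.

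Your necessity argument is also correct, and it is a self-contained route the paper does not take. The bookkeeping you flag goes through once one realizes the multiplicity space as $W_s=\ker X^s/\bigl(\ker X^{s-1}+X(\ker X^{s+1})\bigr)$: the pairing $\langle u,v\rangle_s=\omega(u,X^{s-1}v)$ descends to $W_s$ by skew-adjointness of $X$; it is nondegenerate because $(\ker X^j)^{\perp_\omega}=\mathrm{im}\,X^j$ gives $\bigl(\ker X\cap\mathrm{im}\,X^{s-1}\bigr)^{\perp_\omega}=\mathrm{im}\,X+\ker X^{s-1}$, whose intersection with $\ker X^s$ is exactly $\ker X^{s-1}+X(\ker X^{s+1})$; the symmetry type is $(-1)^s$ as you compute; and $\bar h$ scales the form by $(-1)^{s-1}$. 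The same nondegeneracy also justifies your side remark that odd-size blocks have even multiplicity (alternating form on $W_s$ for odd $s$). This bilinear-form-on-multiplicity-space technique is classical and yields a proof independent of \cite{GM1}, whereas the paper simply quotes the result.
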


The following result characterizes the strongly reversible elements in $\mathrm{Sp}(2n,\mathbb{C})$. 
\begin{lemma}[cf.~{\cite[Theorem 8]{Cr}}]\label{lem-str-rev-symp-grp}
An element $g \in \mathrm{Sp}(2n,\mathbb{C})$ is strongly reversible in $\mathrm{Sp}(2n,\mathbb{C})$ if and only if for every (non-zero) eigenvalue $\lambda \in \mathbb{C}$, the Jordan block $\mathrm{J}(\lambda, k)$ in the Jordan decomposition of $g$ has even multiplicity.
\end{lemma}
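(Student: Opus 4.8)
The plan is to analyze $g$ through its generalized eigenspace decomposition and to translate the existence of a reversing \emph{involution} (as opposed to the skew-involution guaranteed by general reversibility) into a parity statement via an induced bilinear form. Write $\omega$ for the symplectic form with Gram matrix $\mathrm{J}_{2n}$, and recall the standard facts for $g\in\mathrm{Sp}(2n,\C)$: the spectrum is reciprocal-symmetric, so $\mathrm{J}(\lambda,k)$ and $\mathrm{J}(\lambda^{-1},k)$ occur with equal multiplicity, and the generalized eigenspaces satisfy $\omega(V_\lambda,V_\mu)=0$ unless $\lambda\mu=1$. In particular $V_{1}$ and $V_{-1}$ are $\omega$-nondegenerate, while the pairs $V_\lambda\oplus V_{1/\lambda}$ ($\lambda\neq\pm1$) are $\omega$-nondegenerate hyperbolic summands. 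Throughout I use that for symplectic matrices ordinary and symplectic similarity coincide (\lemref{lem-equi-symp-sim}) and that the expanding sum is well behaved (\lemref{lem-prop-exp-sum}).

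For necessity, suppose $h\in\mathrm{Sp}(2n,\C)$ is an involution with $hgh^{-1}=g^{-1}$; then $h$ interchanges $V_\lambda$ and $V_{1/\lambda}$. Fix a reciprocal class and define $\beta(x,y):=\omega(x,hy)$ on $V_\lambda$ (for $\lambda=\pm1$ one has $hV_\lambda=V_\lambda$). Using $h^2=\mathrm{I}$ together with $h\in\mathrm{Sp}$ one checks that $\beta$ is a nondegenerate \emph{alternating} form, and using $hg=g^{-1}h$ together with the symplecticity of $g$ one checks that the nilpotent part $\nu:=(g-\lambda)|_{V_\lambda}$ is $\beta$-self-adjoint, i.e.\ $\beta(\nu x,y)=\beta(x,\nu y)$. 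In a $\beta$-symplectic basis this says precisely that $\nu$ is a nilpotent \emph{skew-Hamiltonian} matrix; since skew-Hamiltonian matrices have every Jordan block occurring with even multiplicity, every $\mathrm{J}(\lambda,k)$ of $g$ has even multiplicity. The decisive point is the sign of $h^2$: had $h$ been merely a skew-involution ($h^2=-\mathrm{I}$), the same computation would render $\beta$ \emph{symmetric} and $\nu$ self-adjoint for a symmetric form, yielding no parity constraint. This is exactly why every element is reversible by a skew-involution but not, in general, by an involution.

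For sufficiency, assume every Jordan block of $g$ has even multiplicity. Grouping blocks by reciprocal class, I claim $g$ is similar to a direct sum, over classes, of blocks of the form $B\oplus B^{-T}$ with $B\sim B^{-1}$ in $\mathrm{GL}$: for $\lambda=\pm1$ take $B$ to be half of the (even) collection of blocks at that eigenvalue, so that the class part is $B\oplus B$ and $B\sim B^{-1}$ holds automatically because the eigenvalue is its own reciprocal; for $\lambda\neq\pm1$ take $B=\bigoplus_k\big(\mathrm{J}(\lambda,k)\oplus\mathrm{J}(1/\lambda,k)\big)^{\oplus r_k}$, which is again similar to $B^{-1}$. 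Since each $B\oplus B^{-T}$ is symplectic, \lemref{lem-equi-symp-sim} and \lemref{lem-prop-exp-sum} give that $g$ is symplectically similar to the expanding sum $\boxplus_j\,(B_j\oplus B_j^{-T})$. The key construction is then: whenever $B\sim B^{-1}$, Wonenburger's theorem in $\mathrm{GL}$ furnishes an involution $D$ with $DBD^{-1}=B^{-1}$, whence $\begin{psmallmatrix} D & \\ & (D^T)^{-1}\end{psmallmatrix}$ is a symplectic involution reversing $B\oplus B^{-T}$. As an expanding sum of symplectic involutions is a symplectic involution, $g$ is strongly reversible.

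I expect the necessity argument to be the main obstacle, specifically the clean identification of $\nu$ as skew-Hamiltonian: one must pin down that $\beta$ is alternating (not symmetric) and that $\nu$ is self-adjoint (not skew-adjoint), since it is exactly the combination ``$h^2=+\mathrm{I}$, $\beta$ alternating, $\nu$ self-adjoint'' that forces the even multiplicities. On the sufficiency side the delicate case is a lone odd-size block at $\lambda=\pm1$: such a block $\mathrm{J}(\pm1,2m+1)$ already has even multiplicity (being odd-sized it cannot occur symplectically with odd multiplicity), and it is absorbed by the same $B\oplus B^{-T}$ construction with $B=\mathrm{J}(\pm1,2m+1)$, for which $B\sim B^{-1}$ holds precisely because $\pm1$ is its own reciprocal.
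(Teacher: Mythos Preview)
Your argument is correct, but note that the paper does not supply its own proof of this lemma: it is quoted from \cite[Theorem~8]{Cr}, with the only supplement being \remref{rem-st-rev-Symp-correction}, which patches a gap in the sufficiency direction of \cite{Cr} for $\lambda\neq\pm1$ by passing through the Lie algebra and invoking \lemref{lem-str-real-pair-non-nil}. So the comparison is really with the approach in \cite{Cr} (as rectified by the paper).

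That approach is computational: one works with explicit symplectic canonical forms for $g$ and produces explicit reversing involutions block by block, and for necessity one analyzes the reversers of individual canonical pieces directly. Your route is genuinely different on both sides. For necessity you transport the problem to each $V_\lambda$ via the induced form $\beta(x,y)=\omega(x,hy)$; the sign $h^{2}=+\mathrm{I}$ forces $\beta$ to be alternating (hence $\dim V_\lambda$ even), and the symplecticity of $g$ together with $hg=g^{-1}h$ makes $(g-\lambda)|_{V_\lambda}$ self-adjoint for $\beta$, i.e.\ skew-Hamiltonian, so \lemref{lem-canonical-form-skew-Hamil-Cruz} gives the even multiplicities. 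This is short, handles all eigenvalue classes uniformly, and explains conceptually why a skew-involution imposes no parity constraint while an involution does. For sufficiency you bypass the explicit block-by-block constructions by grouping into $B\oplus B^{-T}$ with $B\sim B^{-1}$ and appealing to Wonenburger's theorem in $\mathrm{GL}$ to obtain the involution $D$, then lifting to the symplectic involution $D\oplus D^{-T}$. This is more conceptual but less constructive than the explicit reversers built in \cite{Cr} and in \lemref{lem-str-real-pair-non-nil}; in particular it does not display the involution in closed form for a given canonical block.
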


In the next remark, we will fill up a gap in the proof of  \cite[Theorem 8]{Cr}. 
\begin{remark}\label{rem-st-rev-Symp-correction} 
In the proof of \cite[Theorem 8]{Cr},  an involution $H_2$ was constructed to claim that  $P_\lambda$ is strongly reversible in $\mathrm{Sp}(4k,\mathbb{C})$,  where $P_{\lambda} = A \oplus A^{-T} $ such that $A =  \mathrm{J}(\lambda, k) \oplus  (\mathrm{J}(\lambda, k)$,  $\lambda \neq \pm 1$.
We observe that the  involution $H_2$ does not conjugate $ P_{\lambda}$ to $P_{\lambda}^{-1}$, i.e.,  $H_2P_{\lambda}H_2 \not= P^{-1}_{\lambda}$. 
Nevertheless, this issue can be rectified using   \lemref{lem-str-real-pair-non-nil}.  
To see this, write $ \lambda = e^{\mu} $ for some non-zero  $\mu \in \C$. Define $X = P \oplus -P^{T} \in \mathfrak{sp}(4k,\mathbb{C})$, where $P = \mathrm{J}(\mu, k) \oplus \mathrm{J}(\mu, k)$.  Then $\exp(X) $ is symplectically similar to $P_\lambda$.  Using \lemref{lem-str-real-pair-non-nil}, we get that $X$ is strongly $\mathrm{Ad}_{\mathrm{Sp}(4k,\mathbb{C})}$-real. Hence,  $P_\lambda$ is strongly reversible in $\mathrm{Sp}(4k,\mathbb{C})$.  
	\qed
\end{remark}

\section{Adjoint real elements in $\mathfrak{sp}(2n,\mathbb{C})$}\label{sec-adjoint-real}
In this section, we will construct a reversing skew-involution for certain symplectic Jordan forms in $\mathfrak{sp}(2n,\mathbb{C})$.  First, we state a
well-known basic result without proof.
\begin{lemma}  \label{lem-basic-inv}
	Let  $\sigma, \tau \in \mathrm{GL}(n,\C)$ such that  $ \sigma= \mathrm{diag}(1, -1, 1, \dots, (-1)^{n-1})_{n \times n}$ and 
	\begin{equation}
		\tau=[x_{i,j}]_{n \times n}, \hbox{ where }	 x_{i,j} = \begin{cases}
			1 & \text{if $i+j =n+1$},\\
			0 & \text{otherwise}.
		\end{cases}
	\end{equation}
	Then the following statements hold.
	\begin{enumerate}
		
		\item $\sigma^2= \mathrm{I}_{n}$ and  $\sigma \mathrm{J}(0, n) = - \mathrm{J}(0, n)   \sigma$. 
		\item $\tau ^2 = \mathrm{I}_{n}$ and $\tau  (\mathrm{J}(\lambda, n)) = (\mathrm{J}(\lambda, n))^{T}\tau $ for all $\lambda \in \C$.
	\end{enumerate}
\end{lemma}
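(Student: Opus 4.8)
The plan is to verify both statements by direct entry-wise computation, exploiting that $\sigma$ and $\tau$ are each involutions so that the conjugation identities can be put in a symmetric form. For $\sigma = \mathrm{diag}(1,-1,\dots,(-1)^{n-1})$, the $(i,i)$ entry is $(-1)^{i-1}$, so $\sigma^2 = \mathrm{diag}((-1)^{2(i-1)})_{i} = \mathrm{I}_n$ immediately. Since $\sigma^2 = \mathrm{I}_n$, the relation $\sigma \mathrm{J}(0,n) = -\mathrm{J}(0,n)\sigma$ is equivalent to $\sigma \mathrm{J}(0,n)\sigma = -\mathrm{J}(0,n)$. I would then use that $\mathrm{J}(0,n)$ has a single nonzero band, the superdiagonal, with $(\mathrm{J}(0,n))_{i,i+1}=1$, and compute, for the diagonal matrix $\sigma$, that $(\sigma \mathrm{J}(0,n)\sigma)_{i,i+1} = \sigma_{i,i}\,(\mathrm{J}(0,n))_{i,i+1}\,\sigma_{i+1,i+1} = (-1)^{i-1}\cdot 1\cdot (-1)^{i} = -1$, which matches $-(\mathrm{J}(0,n))_{i,i+1}$; all other entries vanish on both sides, giving the claim.

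For $\tau$, the matrix $\tau_{i,j} = \delta_{i+j,\,n+1}$ is the reversal (anti-identity) permutation. I would first record $\tau^2 = \mathrm{I}_n$: the $(i,k)$ entry of $\tau^2$ is $\sum_j \delta_{i+j,\,n+1}\,\delta_{j+k,\,n+1}$, which forces $j = n+1-i$ and then $k = i$, so $\tau^2 = \mathrm{I}_n$. Using $\tau^2 = \mathrm{I}_n$ once more, the target identity $\tau\,\mathrm{J}(\lambda,n) = \mathrm{J}(\lambda,n)^T \tau$ becomes $\tau\,\mathrm{J}(\lambda,n)\,\tau = \mathrm{J}(\lambda,n)^T$. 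The key observation is that conjugation by $\tau$ reflects indices, namely $(\tau A \tau)_{i,k} = A_{n+1-i,\,n+1-k}$ for any $A$. Applying this to $A = \mathrm{J}(\lambda,n)$: the diagonal entries (where $n+1-i = n+1-k$, i.e.\ $i=k$) contribute $\lambda$, while the superdiagonal of $\mathrm{J}(\lambda,n)$ (the position $(n+1-i,\,n+1-k)$ with $n+1-k = (n+1-i)+1$, i.e.\ $k = i-1$) lands on the subdiagonal of $\tau\,\mathrm{J}(\lambda,n)\,\tau$. This is exactly the entry pattern of $\mathrm{J}(\lambda,n)^T$, as required for every $\lambda \in \C$.

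Both verifications are routine index bookkeeping, so there is no substantial obstacle; the only care needed is in tracking the sign $(-1)^{i-1}$ for $\sigma$ and the reflected index $n+1-i$ for $\tau$. Rewriting each conjugation identity in the symmetric form $\sigma\,\mathrm{J}(0,n)\,\sigma = -\mathrm{J}(0,n)$ and $\tau\,\mathrm{J}(\lambda,n)\,\tau = \mathrm{J}(\lambda,n)^T$ (legitimate precisely because $\sigma^2 = \tau^2 = \mathrm{I}_n$) reduces each statement to comparing a single band of nonzero entries, which I expect to be the cleanest organization of the argument.
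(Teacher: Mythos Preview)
Your entry-wise verification is correct and complete; the sign computation for $\sigma$ and the index-reflection identity $(\tau A\tau)_{i,k}=A_{n+1-i,\,n+1-k}$ for $\tau$ are exactly what is needed. The paper itself does not prove this lemma at all---it explicitly states it ``without proof'' as a well-known basic fact---so your argument simply supplies the routine check the authors omitted.
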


Next, we derive several useful facts from \lemref{lem-basic-inv} which will be used in proving \thmref{thm-real-skew-inv}.

\begin{lemma}\label{lem-real-nil-skew-construct}
	Let $X:=  \begin{psmallmatrix}
		\mathrm{J}(0,n) & \mathrm{I}_{n} \\
		& - \mathrm{J}(0,n)^{T}
	\end{psmallmatrix}$ be the symplectic Jordan form in $\mathfrak{sp}(2n,\mathbb{C})$. Then there exists a skew-involution $g\in \mathrm{Sp}(2n,\mathbb{C})$ such that $gXg^{-1}=-X$.
\end{lemma}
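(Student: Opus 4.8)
The plan is to exhibit an explicit skew-involution $g \in \mathrm{Sp}(2n,\mathbb{C})$ in block form $g = \begin{psmallmatrix} a & b \\ c & d \end{psmallmatrix}$ with $a,b,c,d \in \mathrm{M}(n,\C)$, and verify both that $g$ is symplectic, that $g^2 = -\mathrm{I}_{2n}$, and that $gX = -Xg$. The natural candidate uses the intertwiners from \lemref{lem-basic-inv}: recall $\sigma \mathrm{J}(0,n) = -\mathrm{J}(0,n)\sigma$ with $\sigma^2 = \mathrm{I}_n$, and $\tau \mathrm{J}(0,n) = \mathrm{J}(0,n)^T \tau$ with $\tau^2 = \mathrm{I}_n$. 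Since $X$ mixes the block $\mathrm{J}(0,n)$ with $-\mathrm{J}(0,n)^T$, conjugating $X$ to $-X$ must swap these two diagonal blocks (up to signs), so $g$ should be genuinely off-diagonal or have off-diagonal support. A clean first guess is $g = \begin{psmallmatrix} & \sigma\tau \\ -(\sigma\tau)^{?} & \end{psmallmatrix}$ scaled appropriately; I would look for $g$ of the form $\begin{psmallmatrix} & \alpha \\ \beta & \end{psmallmatrix}$ and solve the three constraints.

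First I would work out what $gX = -Xg$ forces. Writing $X = \begin{psmallmatrix} N & \mathrm{I}_n \\ 0 & -N^T \end{psmallmatrix}$ with $N = \mathrm{J}(0,n)$, and $g = \begin{psmallmatrix} 0 & \alpha \\ \beta & 0\end{psmallmatrix}$, the identity $gX = -Xg$ becomes the system $\alpha(-N^T) = N\alpha$ wait — more carefully $gX = \begin{psmallmatrix} 0 & -\alpha N^T \\ \beta N & \beta \end{psmallmatrix}$ and $-Xg = \begin{psmallmatrix} -\mathrm{I}_n\beta & -N\alpha \\ N^T\beta & 0 \end{psmallmatrix}$, so I need $\alpha N^T = N\alpha$, $\beta N = -N^T\beta$, and $\beta = -\beta$ — which fails. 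So the purely anti-diagonal ansatz is too rigid; I would instead allow $g = \begin{psmallmatrix} 0 & \alpha \\ \beta & \gamma \end{psmallmatrix}$ or a diagonal-plus-antidiagonal form, or better, conjugate first by the symplectic matrix realizing the symplectic similarity of $\Delta_{2n}$ and $\Lambda_{2n}$ to a more symmetric model of the nilpotent block (e.g., the single Jordan block $\mathrm{J}(0,2n)$ need not be symplectic, but $\Delta_{2n}$ has a cleaner off-diagonal structure than $\Lambda_{2n}$). The cleanest route: use $\sigma_{2n} := \mathrm{diag}(1,-1,\dots,(-1)^{2n-1})$, which anti-commutes with $\mathrm{J}(0,2n)$ and squares to $\mathrm{I}_{2n}$; the subtlety is that $\sigma_{2n}$ need not lie in $\mathrm{Sp}(2n,\mathbb{C})$, so I would need to correct it — and since we want a \emph{skew}-involution, multiply by a suitable scalar-like symplectic factor or combine with $\mathrm{J}_{2n}$ itself (note $\mathrm{J}_{2n} \in \mathrm{Sp}(2n,\mathbb{C})$ and $\mathrm{J}_{2n}^2 = -\mathrm{I}_{2n}$, so $\mathrm{J}_{2n}$ is already a skew-involution).

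Concretely, the strategy I would pursue: find an element $h$ in the centralizer structure or an explicit matrix so that $h\sigma_{2n}$ (or an analogue built from $\sigma$, $\tau$ on the $n\times n$ blocks) is symplectic; then set $g = \mathrm{J}_{2n}\cdot(\text{symplectic sign-flip})$ to simultaneously get $g^2 = -\mathrm{I}$ and $gXg^{-1} = -X$. In block terms I expect something like $g = \begin{psmallmatrix} 0 & \sigma \\ -\sigma & 0\end{psmallmatrix}$ possibly twisted by $\tau$, and then check the three \textbf{Ob.1}/skew-involution/intertwining conditions using Lemma~\ref{lem-basic-inv}: the relation $\sigma N = -N\sigma$ handles the diagonal blocks, and one verifies $\sigma^T = \sigma$ (true, it is diagonal) so the symplectic condition $g_1 g_4^T - g_2 g_3^T = \mathrm{I}_n$ with symmetry of $g_1g_2^T$, $g_3g_4^T$ reduces to $\sigma^2 = \mathrm{I}_n$. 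I would then confirm $g^2 = \begin{psmallmatrix} -\sigma^2 & 0 \\ 0 & -\sigma^2\end{psmallmatrix} = -\mathrm{I}_{2n}$.

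The main obstacle I anticipate is reconciling \emph{three} competing requirements at once: $g$ symplectic, $g^2 = -\mathrm{I}_{2n}$ (not $+\mathrm{I}_{2n}$), and $gX = -Xg$ where $X$ has the off-diagonal $\mathrm{I}_n$ coupling the nilpotent and its transpose-negative. The anti-diagonal ansatz that kills the diagonal intertwining cleanly tends to produce a constraint forcing $\beta = -\beta$ from that $\mathrm{I}_n$ block, so the real work is choosing the entries (likely a non-symmetric correction term in the lower-left or a nilpotent adjustment like adding a multiple of $N$ or $N^T$ to one block) that absorbs the off-diagonal $\mathrm{I}_n$ while preserving the skew-involution and symplectic conditions; getting these to hold \emph{simultaneously} rather than two-out-of-three is the delicate point. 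Once the matrix is written down, all verifications are direct computations with $\mathrm{J}(0,n)$, $\sigma$, and $\tau$ using only Lemma~\ref{lem-basic-inv} and observations \textbf{(Ob.1)}, \textbf{(Ob.2)}.
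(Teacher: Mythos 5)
Your proposal does not reach a proof: you correctly derive that any purely anti-diagonal ansatz $g=\begin{psmallmatrix} & \alpha\\ \beta & \end{psmallmatrix}$ is killed by the off-diagonal $\mathrm{I}_n$ block of $X$ (the $(2,2)$-entry of $gX$ is $\beta$ while that of $-Xg$ is $0$), yet your one concrete candidate, $g=\begin{psmallmatrix} & \sigma\\ -\sigma & \end{psmallmatrix}$ ``possibly twisted by $\tau$,'' is exactly of that forbidden shape and fails for the reason you yourself found. The rest of the proposal is a search plan (correct it by $\mathrm{J}_{2n}$, pass to another model of the nilpotent block, add a nilpotent perturbation) without a matrix that simultaneously satisfies the three conditions, so the key idea is missing.

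The missing idea is that for this Jordan form the reverser should be block \emph{diagonal} with opposite signs in the two blocks, scaled by $i$: the paper takes $g=\begin{psmallmatrix} i\sigma & \\ & -i\sigma \end{psmallmatrix}$ with $\sigma=\mathrm{diag}(1,-1,\dots,(-1)^{n-1})$. Writing $N=\mathrm{J}(0,n)$, the sign difference between the diagonal blocks is what absorbs the coupling term: the upper-right block of $gX$ is $i\sigma\cdot\mathrm{I}_n=i\sigma$, and that of $-Xg$ is $(-\mathrm{I}_n)(-i\sigma)=i\sigma$, so the troublesome $\mathrm{I}_n$ cancels and the condition $gX=-Xg$ reduces to $\sigma N=-N\sigma$, which is Lemma~\ref{lem-basic-inv}. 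The scalar $i$ then does double duty: it makes $g_1g_4^T=(i\sigma)(-i\sigma)^T=\sigma^2=\mathrm{I}_n$, so $g$ is symplectic (whereas $\mathrm{diag}(\sigma,-\sigma)$ is not), and it forces $g^2=-\mathrm{I}_{2n}$, giving precisely the skew-involution the lemma asks for. Note this is consistent with Lemma~\ref{lem-str-real-nil-symp-alg}: a single block of this type cannot be reversed by a symplectic involution, so no amount of adjusting a real (unscaled) ansatz of the kinds you list could have produced an involution here; the passage to a skew-involution via the factor $i$ is the essential step your proposal lacks.
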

\begin{proof}
	Consider $g=  \begin{psmallmatrix}
		\sigma \ib & \\
		& - \sigma \ib
	\end{psmallmatrix}$, where $\sigma $ is an involution in $\mathrm{GL}(n,\C)$ as defined in \lemref{lem-basic-inv}. Then we get that  $g \in \mathrm{Sp}(2n,\mathbb{C})$ such that $ g^2= -\mathrm{I}_{2n}$. Observe  that $gX =-Xg$ if and only if $  \begin{psmallmatrix}
		\sigma \ib & \\
		& - \sigma \ib
	\end{psmallmatrix} \begin{psmallmatrix}
		\mathrm{J}(0,n) & \mathrm{I}_{n} \\
		& - \mathrm{J}(0,n)^{T}
	\end{psmallmatrix} =   \begin{psmallmatrix}
		-	\mathrm{J}(0,n) & -\mathrm{I}_{n} \\
		&  \mathrm{J}(0,n)^{T}
	\end{psmallmatrix}  \begin{psmallmatrix}
		\sigma \ib & \\
		& - \sigma \ib
	\end{psmallmatrix}$. This implies that 
	\begin{equation*}
		gX =-Xg	\Longleftrightarrow 	\begin{psmallmatrix}
			\sigma	\mathrm{J}(0,n) \ib & 	\sigma \ib \\
			& 	\sigma \mathrm{J}(0,n)^{T} \ib
		\end{psmallmatrix} =   \begin{psmallmatrix}
			-	\mathrm{J}(0,n) 	\sigma \ib & 	\sigma \ib\\
			&  -\mathrm{J}(0,n)^{T} 	\sigma \ib
		\end{psmallmatrix}  \Longleftrightarrow \sigma	\mathrm{J}(0,n) = -	\mathrm{J}(0,n)	\sigma.
	\end{equation*}
	The proof now follows from \lemref{lem-basic-inv}.
\end{proof}

\begin{lemma}\label{lem-real-non-nil-skew-construct}
	Let $X:= \mathrm{J}(\lambda, n) \oplus - (\mathrm{J}(\lambda, n))^{T}$ be the symplectic Jordan form in $\mathfrak{sp}(2n,\mathbb{C})$, where $\lambda \in \C$. Then there exists a skew-involution $g\in \mathrm{Sp}(2n,\mathbb{C})$ such that $gXg^{-1}=-X$.
\end{lemma}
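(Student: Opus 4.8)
The goal is to exhibit a single skew-involution $g \in \mathrm{Sp}(2n,\mathbb{C})$ (with $g^2 = -\mathrm{I}_{2n}$) that conjugates $X := \mathrm{J}(\lambda,n) \oplus -(\mathrm{J}(\lambda,n))^{T}$ to $-X$. The block structure of $X$ is $\begin{psmallmatrix} \mathrm{J}(\lambda,n) & \\ & -\mathrm{J}(\lambda,n)^{T}\end{psmallmatrix}$, so I want $g$ to intertwine the upper-left block $\mathrm{J}(\lambda,n)$ with the lower-right block $-(-\mathrm{J}(\lambda,n)^{T}) = \mathrm{J}(\lambda,n)^{T}$. This strongly suggests taking $g$ off-diagonal, of the form $g = \begin{psmallmatrix} & a\tau \\ b\tau & \end{psmallmatrix}$ for scalars $a,b \in \mathbb{C}$ and $\tau$ the anti-diagonal involution from \lemref{lem-basic-inv}, which satisfies $\tau \mathrm{J}(\lambda,n) = \mathrm{J}(\lambda,n)^{T}\tau$ and $\tau^2 = \mathrm{I}_n$.

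\textbf{Key steps.} First I would compute $gXg^{-1} = -X$ as a matrix identity in block form. Writing out $gX$ and $-Xg$ and using $\tau \mathrm{J}(\lambda,n) = \mathrm{J}(\lambda,n)^{T}\tau$, one checks that the intertwining condition $gX = -Xg$ reduces to an identity that holds automatically by \lemref{lem-basic-inv}(2), independently of the scalars $a,b$; the relation $\tau \mathrm{J}(\lambda,n)^T = \mathrm{J}(\lambda,n)\tau$ (the transpose/symmetric version) handles the other diagonal block. Second, I would impose $g \in \mathrm{Sp}(2n,\mathbb{C})$: using \textbf{(Ob.1)}, with $g_1 = g_4 = 0$, $g_2 = a\tau$, $g_3 = b\tau$, the condition $g_1 g_4^T - g_2 g_3^T = \mathrm{I}_n$ becomes $-a b\, \tau\tau^T = -ab\,\mathrm{I}_n = \mathrm{I}_n$ (since $\tau$ is symmetric and $\tau^2 = \mathrm{I}_n$), so I need $ab = -1$; the symmetry conditions on $g_1 g_2^T$ and $g_3 g_4^T$ are vacuous since those products are zero. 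Third, I impose $g^2 = -\mathrm{I}_{2n}$: computing $g^2 = \begin{psmallmatrix} ab\,\tau^2 & \\ & ab\,\tau^2\end{psmallmatrix} = ab\,\mathrm{I}_{2n}$, which equals $-\mathrm{I}_{2n}$ precisely when $ab = -1$ — the very same condition. So any choice such as $a = 1$, $b = -1$ works, giving $g = \begin{psmallmatrix} & \tau \\ -\tau & \end{psmallmatrix}$.

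\textbf{Main obstacle.} There is no serious obstacle here; this lemma is a direct analogue of \lemref{lem-real-nil-skew-construct} for the non-nilpotent (and zero-eigenvalue paired) blocks, and the only thing to be careful about is matching the signs and the symplectic/skew-involution conditions simultaneously — which, as noted above, happily coincide. The one point worth double-checking is the precise form of the relation between $\tau$ and $\mathrm{J}(\lambda,n)^{T}$ versus $\mathrm{J}(\lambda,n)$ in the second diagonal block of the matrix equation, but both follow from \lemref{lem-basic-inv}(2) by transposing. Thus the proof is essentially a short verification: define $g := \begin{psmallmatrix} & \tau \\ -\tau & \end{psmallmatrix}$, cite \textbf{(Ob.1)} for $g \in \mathrm{Sp}(2n,\mathbb{C})$, note $g^2 = -\mathrm{I}_{2n}$, and verify $gX = -Xg$ using \lemref{lem-basic-inv}.
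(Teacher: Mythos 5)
Your proposal is correct and follows essentially the same route as the paper: the paper's proof uses exactly the reverser $g=\begin{psmallmatrix} & \tau\\ -\tau & \end{psmallmatrix}$ you arrive at, checks $g\in\mathrm{Sp}(2n,\mathbb{C})$ with $g^{2}=-\mathrm{I}_{2n}$, and reduces $gX=-Xg$ to the relation $\tau\,\mathrm{J}(\lambda,n)=\mathrm{J}(\lambda,n)^{T}\tau$ from \lemref{lem-basic-inv}. Your extra step of parametrizing by scalars $a,b$ and observing that the symplectic and skew-involution conditions both force $ab=-1$ is a harmless elaboration, not a different method.
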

\begin{proof}
	Consider $g=  \begin{psmallmatrix}
		& \tau\\
		-	\tau	& 
	\end{psmallmatrix}$, where $\tau \in \mathrm{GL}(n,\C)$ is an involution  as defined in \lemref{lem-basic-inv}. Then  $g \in \mathrm{Sp}(2n,\mathbb{C})$ and $ g^2= -\mathrm{I}_{2n}$. Note that $gX =-Xg$ if and only if $ \begin{psmallmatrix}
		& \tau\\
		-	\tau	& 
	\end{psmallmatrix} \begin{psmallmatrix}
		\mathrm{J}(\lambda, n) &  \\
		& -  \mathrm{J}(\lambda, n)^{T}
	\end{psmallmatrix} =   \begin{psmallmatrix}
		-	\mathrm{J}(\lambda, n) &  \\
		&  \mathrm{J}(\lambda, n)^{T}
	\end{psmallmatrix} \begin{psmallmatrix}
		& \tau\\
		-	\tau	& 
	\end{psmallmatrix}$. This implies that
	\begin{equation*}
		gX =-Xg	\Longleftrightarrow 	\begin{psmallmatrix}
			& 	-\tau  \mathrm{J}(\lambda, n)^{T}\\
			- \tau	 \mathrm{J}(\lambda, n)	& 	
		\end{psmallmatrix} =  	\begin{psmallmatrix}
			& 	- \mathrm{J}(\lambda, n) \tau \\
			- 	 \mathrm{J}(\lambda, n)^{T} \tau	& 	
		\end{psmallmatrix}  \Longleftrightarrow  \tau \mathrm{J}(\lambda, n) = 	\mathrm{J}(\lambda, n)^{T} \tau.
	\end{equation*}
	The proof now follows from \lemref{lem-basic-inv}.
\end{proof}

\subsection{Proof of \thmref{thm-real-skew-inv}} In view of \lemref{lem-prop-exp-sum}, the expanding sum of two symplectic skew-involutions is a symplectic skew-involution. The proof of  \thmref{thm-real-skew-inv} now follows from \propref{prop-modified-canonical-form}, \lemref{lem-real-nil-skew-construct}, and \lemref{lem-real-non-nil-skew-construct}.
\qed

\section{Strongly  adjoint real elements in $\mathfrak{sp}(2n,\mathbb{C})$}
\label{sec-st-real}
In this section, we will prove  \thmref{thm-main-str-real-sp(2n,C)}, which classifies the strongly $\mathrm{Ad}_{\mathrm{Sp}(2n,\mathbb{C})}$-real elements in $\mathfrak{sp}(2n,\mathbb{C})$.  The following result uses the structure of the reversing symmetry group for $X \in \mathfrak{sp}(2n,\mathbb{C})$, as introduced in \secref{sec-rev-symm-gp}. We refer to \defref{def-expanding-sum} for the notion of the expanding sum of two square matrices.
\begin{lemma}\label{lem-factor-str-real}
Let $X := X_{1} \boxplus  X_{2} \in \mathfrak{sp}(2n,\mathbb{C})$, where $X_{1} \in \mathfrak{sp}(2k,\mathbb{C})$ and $X_{2} \in \mathfrak{sp}(2n-2k,\mathbb{C})$ such that $k \in \N \cup \{0\}$ and $\sigma( X_{1}) \cap \sigma( X_{2}) =  \emptyset$. Then $X$ is strongly $\mathrm{Ad}_{\mathrm{Sp}(2n,\mathbb{C})}$-real element if and only if  $X_{1}$  is strongly $\mathrm{Ad}_{\mathrm{Sp}(2k,\mathbb{C})}$-real and    $ X_{2}$  is  strongly $\mathrm{Ad}_{\mathrm{Sp}(2n-2k,\mathbb{C})}$-real element, respectively.
\end{lemma}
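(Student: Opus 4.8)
The statement is an "if and only if" about when an expanding sum $X = X_1 \boxplus X_2$ with disjoint spectra is strongly $\mathrm{Ad}_{\mathrm{Sp}(2n,\C)}$-real, and the natural strategy is to exploit the spectral splitting to decompose any candidate reversing involution. For the easy direction, suppose $h_i \in \mathrm{Sp}(2k_i,\C)$ is an involution with $h_i X_i h_i^{-1} = -X_i$ for $i = 1,2$. Then $h := h_1 \boxplus h_2$ is a symplectic involution by \lemref{lem-prop-exp-sum}(1) and the remark following it (expanding sum of symplectic involutions is a symplectic involution), and by \lemref{lem-prop-exp-sum}(4) one computes $h X h^{-1} = (h_1 X_1 h_1^{-1}) \boxplus (h_2 X_2 h_2^{-1}) = (-X_1) \boxplus (-X_2) = -X$, so $X$ is strongly $\mathrm{Ad}$-real. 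This direction is essentially a one-line verification once the expanding-sum identities are invoked.

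\textbf{The hard direction.} Assume $X = X_1 \boxplus X_2$ is strongly $\mathrm{Ad}_{\mathrm{Sp}(2n,\C)}$-real, so there is a symplectic involution $h$ with $h X h^{-1} = -X$. I would first observe that $-X = (-X_1) \boxplus (-X_2)$ has $\sigma(-X_1) = -\sigma(X_1)$ and $\sigma(-X_2) = -\sigma(X_2)$. Here I must be slightly careful: the hypothesis is $\sigma(X_1) \cap \sigma(X_2) = \emptyset$, and what I actually need is that the reversing involution respects the decomposition, which requires knowing that $h$ commutes with $h^2 = \mathrm{I}$ trivially but more importantly that $h$ conjugates the generalized eigenspace decomposition of $X$ in a controlled way. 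The key point is that $h^2 = \mathrm{I}$ and $hXh^{-1} = -X$ together imply $h$ commutes with $X^2$ (indeed $h X^2 h^{-1} = (hXh^{-1})^2 = (-X)^2 = X^2$), so $h \in \mathcal{Z}(X^2)$. Now $X^2 = X_1^2 \boxplus X_2^2$, and provided $\sigma(X_1^2) \cap \sigma(X_2^2) = \emptyset$ — which needs $\sigma(X_1) \cap (-\sigma(X_2)) = \emptyset$ as well, not just $\sigma(X_1)\cap\sigma(X_2)=\emptyset$ — I can apply \lemref{lem-prop-exp-sum}(5) to conclude $h = h_1 \boxplus h_2$ with $h_i \in \mathcal{Z}(X_i^2)$. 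This is the step I expect to be the main obstacle: the spectral-disjointness hypothesis as literally stated ($\sigma(X_1)\cap\sigma(X_2)=\emptyset$) does not by itself force $\sigma(X_1^2)\cap\sigma(X_2^2)=\emptyset$, so I would either need to argue that in the intended application (Jordan blocks grouped by $\pm\lambda$ pairs) the stronger disjointness holds, or reorganize the decomposition so that $X_1$ collects a $\pm\lambda$-pair worth of blocks and $X_2$ the rest, making $X_1$ itself invariant under negation; I suspect the authors intend the latter and the lemma is applied to such a grouping. Alternatively one can pass to the generalized eigenspace for a single eigenvalue of $X$ and its negative, on which $h$ necessarily acts.

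\textbf{Completing the argument.} Once $h = h_1 \boxplus h_2$ is established, the conditions transfer cleanly: $h^2 = \mathrm{I}_{2n}$ gives $h_1^2 \boxplus h_2^2 = \mathrm{I}_{2k} \boxplus \mathrm{I}_{2n-2k}$ (using $\mathrm{I}_{2k}\boxplus\mathrm{I}_{2n-2k} = \mathrm{I}_{2n}$), hence $h_i^2 = \mathrm{I}$; that $h \in \mathrm{Sp}(2n,\C)$ gives $h_i \in \mathrm{Sp}(2k_i,\C)$ by \lemref{lem-prop-exp-sum}(1); and $hXh^{-1} = -X$ reads $(h_1 X_1 h_1^{-1}) \boxplus (h_2 X_2 h_2^{-1}) = (-X_1) \boxplus (-X_2)$ by \lemref{lem-prop-exp-sum}(3),(4), whence $h_i X_i h_i^{-1} = -X_i$ by uniqueness of the blocks of an expanding sum (or simply by reading off the four corner blocks). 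So each $X_i$ is strongly $\mathrm{Ad}_{\mathrm{Sp}(2k_i,\C)}$-real, finishing the proof. I would also note at the outset that the degenerate case $k = 0$ (or $k = n$) is vacuous, and that by \lemref{lem-prop-exp-sum}(2) together with \lemref{lem-equi-symp-sim} one may freely replace $X_1 \boxplus X_2$ by $X_2 \boxplus X_1$ throughout, which is convenient if one wants to peel off blocks one spectral value at a time.
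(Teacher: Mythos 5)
Your easy direction and your final block-reading step are fine, but as written the hard direction stalls exactly where you flag it: you pass to $X^2=X_1^2\boxplus X_2^2$ and need $\sigma(X_1^2)\cap\sigma(X_2^2)=\emptyset$, which you correctly observe does not follow from $\sigma(X_1)\cap\sigma(X_2)=\emptyset$ for arbitrary matrices, and you then leave the matter to speculation about how the lemma is "intended" to be applied. That speculation is unnecessary, and the lemma is true as stated: $X_1$ and $X_2$ are Hamiltonian, and any $Y\in\mathfrak{sp}(2m,\mathbb{C})$ satisfies ${\rm J}_{2m}^{-1}Y^T{\rm J}_{2m}=-Y$, so $\sigma(Y)=-\sigma(Y)$. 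Hence $-\sigma(X_2)=\sigma(X_2)$, and $\sigma(X_1)\cap\sigma(X_2)=\emptyset$ already gives $\sigma(X_1)\cap(-\sigma(X_2))=\emptyset$, so if $\lambda\in\sigma(X_1)$, $\mu\in\sigma(X_2)$ had $\lambda^2=\mu^2$ then $\mu=\pm\lambda\in\sigma(X_1)$, a contradiction. With this one observation your application of \lemref{lem-prop-exp-sum}(5) to $\mathcal{Z}(X^2)$ is legitimate, and the rest of your argument (symplecticity and involutivity of each block via \lemref{lem-prop-exp-sum}(1),(4), then reading off $h_iX_ih_i^{-1}=-X_i$) closes the proof. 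So the gap is real but entirely fixable inside your own approach; no regrouping of Jordan blocks is needed.

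For comparison, the paper avoids $X^2$ altogether. It first builds an explicit (not necessarily involutive) reverser $h=h_1\boxplus h_2$ for $X$ from the canonical forms (\propref{prop-modified-canonical-form}, \lemref{lem-real-nil-skew-construct}, \lemref{lem-real-non-nil-skew-construct}), then uses the reversing-symmetry-group structure of \secref{sec-rev-symm-gp}: the given symplectic involution $g$ with $gXg^{-1}=-X$ lies in the same coset of $\mathcal{Z}(X)$, so $g=fh$ with $fX=Xf$. Since $f$ centralizes $X$ itself, \lemref{lem-prop-exp-sum}(5) applies under the hypothesis exactly as stated, giving $f=f_1\boxplus f_2$ and hence $g=(f_1h_1)\boxplus(f_2h_2)$, after which the conclusion is read off blockwise just as in your write-up. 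Your route, once repaired by the negation-symmetry of Hamiltonian spectra, is arguably more self-contained (it needs no prior construction of a reverser), while the paper's route never has to worry about spectra of squares.
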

\begin{proof} Suppose that $0<k<n$; otherwise we are done.
Since	$X$ is strongly $\mathrm{Ad}_{\mathrm{Sp}(2n,\mathbb{C})}$-real element, there exists $g \in \mathrm{Sp}(2n, \C)$ such that 
	$g^2 = \mathrm{I}_{2n}$ and $gXg^{-1} =-X$. In view of \propref{prop-modified-canonical-form}, \lemref{lem-real-nil-skew-construct} and \lemref{lem-real-non-nil-skew-construct}, we can construct  $h_1 \in \mathrm{Sp}(2k, \C)$ and  $h_2 \in \mathrm{Sp}(2n-2k, \C)$  such that $h_1X_1h_1^{-1}=-X_1$ and $h_2X_2 h_2^{-1}=-X_2$. Set $h :=h_1 \boxplus h_2$.  Then \lemref{lem-prop-exp-sum} implies that $h \in \mathrm{Sp}(2n, \C)$ such that $hXh^{-1} =-X$.

Since the set of reversers of $X  \in \mathfrak{sp}(2n,\mathbb{C})$ is a right coset of the centralizer of $X$, we have
	$$g = fh,$$
	where	$f \in \mathrm{M}(2n, \C)$ such that $fX=Xf$; see Section \ref{sec-rev-symm-gp}.
	Using \lemref{lem-prop-exp-sum}, we get that 
	$$f =f_1 \boxplus f_2,$$
	$f_1 \in \mathrm{M}(2k, \C)$ and  $f_2 \in \mathrm{M}(2n-2k, \C)$  such that $f_1X_1=X_1f_1$ and $f_2X_2=X_2f_2$, respectively. Therefore, we have
	$$g = fh= g_1 \boxplus g_2,$$
	where $g_1 =f_1h_1 \in \mathrm{Sp}(2k, \C)$ and  $g_2 =f_2h_2 \in \mathrm{Sp}(2n-2k, \C)$. Moreover, the equations $g^2 = \mathrm{I}_{2n}$ and $gX g^{-1}=-X$  imply that
	$$g_{1}^{2} = \mathrm{I}_{2k}, \, g_1X_1 g_{1}^{-1}=-X_1, \hbox{ and } g_{2}^{2} = \mathrm{I}_{2n-2k}, \, g_2X_2 g_{2}^{-1}=-X_2.
	$$
	
Conversely, recall that the expanding sum of two symplectic involutions is a symplectic involution. The proof now follows from \lemref{lem-prop-exp-sum}.
\end{proof}

\subsection{Strong  reality of certain symplectic Jordan forms in $\mathfrak{sp}(2n,\mathbb{C})$.} In this subsection, we investigate certain strongly $\mathrm{Ad}_{\mathrm{Sp}(2n,\mathbb{C})}$-real symplectic Jordan forms in $\mathfrak{sp}(2n,\mathbb{C})$.

\begin{lemma}\label{lem-str-real-odd-nil}
Let $X:= \mathrm{J}(0, n) \oplus - (\mathrm{J}(0, n))^{T}$ be the symplectic Jordan form in $\mathfrak{sp}(2n,\mathbb{C})$. Then $X$ is strongly $\mathrm{Ad}_{\mathrm{Sp}(2n,\mathbb{C})}$-real.
\end{lemma}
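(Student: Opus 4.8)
The plan is to exhibit an explicit symplectic involution $g \in \mathrm{Sp}(2n,\mathbb{C})$ with $gXg^{-1} = -X$, where $X = \mathrm{J}(0,n) \oplus -(\mathrm{J}(0,n))^T$. I would look for $g$ in block off-diagonal form $g = \begin{psmallmatrix} & \tau \\ \tau & \end{psmallmatrix}$, using the matrix $\tau$ from \lemref{lem-basic-inv}, since $\tau^2 = \mathrm{I}_n$ makes $g^2 = \mathrm{I}_{2n}$ immediately, and the off-diagonal shape swaps the two blocks $\mathrm{J}(0,n)$ and $-(\mathrm{J}(0,n))^T$ of $X$. First I would check that $g$ is symplectic: by $(\mathbf{Ob.1})$ with $g_1 = g_4 = 0$, $g_2 = g_3 = \tau$, we need $-g_2 g_3^T = \mathrm{I}_n$, i.e. $-\tau\tau^T = \mathrm{I}_n$, which fails since $\tau\tau^T = \mathrm{I}_n$. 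So instead I would take $g = \begin{psmallmatrix} & \tau \\ -\tau & \end{psmallmatrix}$, but that has $g^2 = -\mathrm{I}_{2n}$, a skew-involution, not an involution. This is exactly the tension the lemma must resolve, and it is the main obstacle: a naive single off-diagonal reverser is forced to be a skew-involution.

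To get an honest involution I would instead combine the off-diagonal swap with the diagonal sign-change $\sigma = \mathrm{diag}(1,-1,\dots,(-1)^{n-1})$ from \lemref{lem-basic-inv}, which satisfies $\sigma \mathrm{J}(0,n) = -\mathrm{J}(0,n)\sigma$ and $\sigma^2 = \mathrm{I}_n$. A good candidate is $g = \begin{psmallmatrix} \sigma & \\ & \pm\sigma \end{psmallmatrix}$, which is symplectic when the lower block is $(\sigma^T)^{-1} = \sigma$ (take the $+$ sign), has $g^2 = \mathrm{I}_{2n}$, but conjugating $X$ by it sends $\mathrm{J}(0,n) \mapsto -\mathrm{J}(0,n)$ and $-(\mathrm{J}(0,n))^T \mapsto -(-(\mathrm{J}(0,n))^T)$ only if $\sigma (\mathrm{J}(0,n))^T \sigma = -(\mathrm{J}(0,n))^T$, which does hold by transposing $\sigma \mathrm{J}(0,n)\sigma = -\mathrm{J}(0,n)$ since $\sigma^T = \sigma$. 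So in fact $g = \sigma \oplus \sigma$ already works: it is symplectic (diagonal with mutually inverse-transpose blocks), an involution, and conjugates $X$ to $-X$. I would verify the symplectic condition via $(\mathbf{Ob.1})$: $g_1 g_4^T = \sigma\sigma^T = \mathrm{I}_n$ and $g_1 g_2^T = g_3 g_4^T = 0$ are symmetric, so $g \in \mathrm{Sp}(2n,\mathbb{C})$.

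Thus the key steps, in order, are: (1) recall $\sigma$ from \lemref{lem-basic-inv} and set $g = \sigma \oplus \sigma$; (2) check $g \in \mathrm{Sp}(2n,\mathbb{C})$ using $(\mathbf{Ob.1})$ and $g^2 = \mathrm{I}_{2n}$ using $\sigma^2 = \mathrm{I}_n$; (3) compute $gXg^{-1}$ block by block, using $\sigma\mathrm{J}(0,n)\sigma = -\mathrm{J}(0,n)$ and its transpose $\sigma(\mathrm{J}(0,n))^T\sigma = -(\mathrm{J}(0,n))^T$, to conclude $gXg^{-1} = -X$. The only subtlety worth flagging is making sure the sign works out consistently in both diagonal blocks simultaneously; since the same relation $\sigma\mathrm{J}(0,n) = -\mathrm{J}(0,n)\sigma$ (and its transpose) governs both, there is no obstruction, and the lemma follows. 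Alternatively, this is a special case of \lemref{lem-str-real-nil-symp-alg} with a single odd-size nilpotent block (multiplicity one, which is permitted since the constraint there is only on even-size blocks), but an explicit $g$ is cleaner and will be reused later.
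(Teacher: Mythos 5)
Your proof is correct and takes essentially the same route as the paper: the paper also takes $g=\begin{psmallmatrix}\sigma & \\ & \sigma\end{psmallmatrix}$ with $\sigma$ from \lemref{lem-basic-inv} and verifies, exactly as you do, that $g$ is a symplectic involution with $gXg^{-1}=-X$ (the paper just refers back to the computation in \lemref{lem-real-nil-skew-construct}). Only your closing aside is slightly off: the Jordan form of $X=\mathrm{J}(0,n)\oplus-(\mathrm{J}(0,n))^{T}$ consists of the block $\mathrm{J}(0,n)$ with multiplicity two (and $n$ need not be odd), not a single block of multiplicity one, though the appeal to \lemref{lem-str-real-nil-symp-alg} still goes through since any even-size block then has even multiplicity.
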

\begin{proof}
Consider $g=  \begin{psmallmatrix}
		\sigma & \\
		& \sigma
	\end{psmallmatrix}$, where $\sigma $ is an involution in $\mathrm{GL}(n,\C)$ as defined in \lemref{lem-basic-inv}. Then using a similar line of arguments as used in the proof of \lemref{lem-real-nil-skew-construct}, we get that $g \in \mathrm{Sp}(2n,\mathbb{C})$ such that $ g^2= \mathrm{I}_{2n}$  and $gXg^{-1}=-X$. This proves the lemma.
\end{proof}

\begin{corollary}\label{cor-str-real-pair-even-nil}
Let $X :=Y \boxplus Y \in \mathfrak{sp}(4n,\mathbb{C}) $, where $Y=  \begin{psmallmatrix}
		\mathrm{J}(0,n) & \mathrm{I}_{n} \\
		& - \mathrm{J}(0,n)^{T}
	\end{psmallmatrix}$ be the symplectic Jordan form in $\mathfrak{sp}(2n,\mathbb{C})$. Then $X$ is strongly $\mathrm{Ad}_{\mathrm{Sp}(4n,\mathbb{C})}$-real.
\end{corollary}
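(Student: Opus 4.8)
The plan is to exhibit a single explicit symplectic involution $g \in \mathrm{Sp}(4n,\mathbb{C})$ that conjugates $X = Y \boxplus Y$ to $-X$. The natural candidate is the "swap" involution that exchanges the two copies of $Y$, suitably dressed up so that it lands in $\mathrm{Sp}(4n,\mathbb{C})$. Concretely, writing $Y$ in the $2\times 2$ block form $Y = \begin{psmallmatrix} \mathrm{J}(0,n) & \mathrm{I}_n \\ & -\mathrm{J}(0,n)^T \end{psmallmatrix}$ and recalling that $X = Y \boxplus Y$ has the structure $\begin{psmallmatrix} \mathrm{J}(0,n)\oplus\mathrm{J}(0,n) & \mathrm{I}_{2n} \\ & -(\mathrm{J}(0,n)\oplus\mathrm{J}(0,n))^T \end{psmallmatrix}$, I would look for $g$ of the block form $g = \begin{psmallmatrix} a & \\ & b \end{psmallmatrix}$ where $a, b \in \mathrm{GL}(2n,\mathbb{C})$ are built from the involution $\sigma$ of \lemref{lem-basic-inv} together with a $2\times 2$ permutation that swaps the two $n$-blocks. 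By \textbf{(Ob.2)} such a block-diagonal $g$ is symplectic precisely when $b = (a^T)^{-1}$, and the conjugation condition $gXg^{-1} = -X$ reduces to two requirements: $a(\mathrm{J}(0,n)\oplus\mathrm{J}(0,n))a^{-1} = -(\mathrm{J}(0,n)\oplus\mathrm{J}(0,n))$ and $a(a^T) = \mathrm{I}_{2n}$ (to kill the off-diagonal identity block), plus $b$ handling the transpose block automatically once $a$ is fixed.

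The cleanest route is probably to avoid the swap entirely and simply take $g = \begin{psmallmatrix} \sigma & \\ & \sigma \end{psmallmatrix} \boxplus \begin{psmallmatrix} \sigma & \\ & \sigma \end{psmallmatrix}$, i.e., the expanding sum of two copies of the involution constructed in \lemref{lem-str-real-odd-nil} — wait, that conjugates $\mathrm{J}(0,n)\oplus -\mathrm{J}(0,n)^T$ to its negative, not the $\Lambda_{2n}$-form. So instead I would reduce directly: by \propref{prop-modified-canonical-form} and \lemref{lem-equi-symp-sim}, $X = Y \boxplus Y$ is symplectically similar to $\Lambda_{2n} \boxplus \Lambda_{2n}$, and since strong adjoint reality is invariant under symplectic similarity (as it is under conjugation), it suffices to produce the involution for this canonical form. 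For $\Lambda_{2n} \boxplus \Lambda_{2n}$, I would use a reverser of the form $g = \begin{psmallmatrix} & \tau' \\ \pm\tau' & \end{psmallmatrix}$ in the block decomposition coming from the $\boxplus$, where $\tau'$ is an appropriate $2n\times 2n$ matrix involving $\sigma$; the swap between the two copies of $\Lambda_{2n}$ is what allows us to upgrade the skew-involution of \lemref{lem-real-nil-skew-construct} into a genuine involution, since $\begin{psmallmatrix} & A \\ A^{-1} & \end{psmallmatrix}^2 = \mathrm{I}$ whenever it is well-defined. I would verify the symplectic condition via \lemref{lem-prop-exp-sum}(1) and \textbf{(Ob.1)}, and the conjugation identity via the relation $\sigma \mathrm{J}(0,n) = -\mathrm{J}(0,n)\sigma$ from \lemref{lem-basic-inv}.

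The key step — and the main obstacle — is ensuring that the reversing element is an honest involution ($g^2 = \mathrm{I}_{4n}$) rather than merely a skew-involution. For a single copy of the even nilpotent block $\Lambda_{2l}$, \lemref{lem-real-nil-skew-construct} only produces a skew-involution, and indeed Lemma~\ref{lem-str-real-nil-symp-alg} shows that a single even block is \emph{not} strongly adjoint real; the evenness of the multiplicity is exactly what rescues us. So the plan hinges on the identity that a block-antidiagonal matrix $\begin{psmallmatrix} & A \\ B & \end{psmallmatrix}$ squares to $\begin{psmallmatrix} AB & \\ & BA \end{psmallmatrix}$, which equals the identity when $B = A^{-1}$, turning the pairing of two skew-involutions into one true involution. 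I would choose $A$ so that $A$ (acting on one copy of $\Lambda_{2n}$, intertwined with the swap) still conjugates the nilpotent part correctly — concretely $A = \begin{psmallmatrix} \sigma\ib & \\ & -\sigma\ib \end{psmallmatrix}$ as in \lemref{lem-real-nil-skew-construct}, with $B = A^{-1} = \begin{psmallmatrix} -\sigma\ib & \\ & \sigma\ib \end{psmallmatrix}$ (using $\ib^2 = -1$ and $\sigma^2 = \mathrm{I}$), and check that $g = \begin{psmallmatrix} & A \\ B & \end{psmallmatrix}$ in the $\boxplus$-block structure satisfies $g^2 = \mathrm{I}_{4n}$, $g \in \mathrm{Sp}(4n,\mathbb{C})$, and $gXg^{-1} = -X$. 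Once these three verifications are done — each a short block computation using \lemref{lem-basic-inv} and \lemref{lem-prop-exp-sum} — the corollary follows.
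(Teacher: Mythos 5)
Your final construction is correct, but it follows a genuinely different route from the paper. The paper's proof is a two-line reduction: $Y \boxplus Y$ is a Hamiltonian matrix whose Jordan form is $\mathrm{J}(0,2n) \oplus \mathrm{J}(0,2n)$, so by \lemref{lem-equi-symp-sim} it is symplectically similar to the symplectic Jordan form $\mathrm{J}(0,2n) \oplus -\mathrm{J}(0,2n)^{T}$, and the already-proved \lemref{lem-str-real-odd-nil} (with $n$ replaced by $2n$) supplies the involution $\sigma \oplus \sigma$; no new computation is needed. You instead build an explicit involution for $X=\Lambda_{2n}\boxplus\Lambda_{2n}$ by swapping the two copies and dressing the swap with the skew-involution $A=\begin{psmallmatrix}\sigma\ib & \\ & -\sigma\ib\end{psmallmatrix}$ of \lemref{lem-real-nil-skew-construct} on one side and $A^{-1}=-A$ on the other; this is essentially the same trick the paper itself uses later in \lemref{lem-str-real-pair-non-nil} for paired blocks with non-zero eigenvalue, so your argument is in the spirit of the paper even though it bypasses the Horn--Merino lemma. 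I checked the three verifications you defer: in standard coordinates your reverser is the block-diagonal symplectic matrix $g=\begin{psmallmatrix} a & \\ & (a^{T})^{-1}\end{psmallmatrix}$ with $a=\begin{psmallmatrix} & \sigma\ib \\ -\sigma\ib & \end{psmallmatrix}$, and using $\sigma\mathrm{J}(0,n)=-\mathrm{J}(0,n)\sigma$ one gets $g^{2}=\mathrm{I}_{4n}$, $g\in\mathrm{Sp}(4n,\C)$ by \textbf{(Ob.1)}, and $gXg^{-1}=-X$, so the plan goes through. Two small cleanups: the ``reduction to $\Lambda_{2n}\boxplus\Lambda_{2n}$'' is vacuous, since $Y$ \emph{is} $\Lambda_{2n}$; and \lemref{lem-prop-exp-sum}(1) does not literally apply to your swap matrix (it is not an expanding sum of two symplectic matrices), so the symplecticity check should be done directly via \textbf{(Ob.1)}/\textbf{(Ob.2)}, as above. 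What each approach buys: the paper's is shorter and reuses the odd-size lemma, while yours produces an explicit reverser for the paired even nilpotent form, in the same style as the reversers exhibited elsewhere in Sections 3--4.
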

\begin{proof}
In view of \lemref{lem-equi-symp-sim}, 	$X$ is symplectically similar to $\mathrm{J}(0, 2n) \oplus - (\mathrm{J}(0, 2n))^{T}$ in $\mathfrak{sp}(4n,\mathbb{C}) $. Hence, \lemref{lem-str-real-odd-nil} implies that $X$ is strongly $\mathrm{Ad}_{\mathrm{Sp}(4n,\mathbb{C})}$-real.
\end{proof}

\begin{lemma}\label{lem-str-real-pair-non-nil} Let $\lambda$ be a non-zero complex number.  Consider $X :=Y \boxplus Y \in \mathfrak{sp}(4n,\mathbb{C}) $, where $Y= \mathrm{J}(\lambda, n) \oplus - (\mathrm{J}(\lambda, n))^{T}$ is the symplectic Jordan form in $\mathfrak{sp}(2n,\mathbb{C})$. Then $X$ is strongly $\mathrm{Ad}_{\mathrm{Sp}(4n,\mathbb{C})}$-real.
\end{lemma}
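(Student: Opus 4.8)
The plan is to construct an explicit symplectic involution $g \in \mathrm{Sp}(4n,\mathbb{C})$ with $gXg^{-1} = -X$, working with the block structure $X = Y \boxplus Y$ where $Y = \mathrm{J}(\lambda,n)\oplus-(\mathrm{J}(\lambda,n))^{T}$. First I would recall from \lemref{lem-real-non-nil-skew-construct} that $Y$ already admits a reversing \emph{skew}-involution $g_0 = \begin{psmallmatrix} & \tau \\ -\tau & \end{psmallmatrix}$ in $\mathrm{Sp}(2n,\mathbb{C})$, so that the obstacle is purely that a single copy of $Y$ is reversed only by a skew-involution, never by an honest involution (since $\lambda \neq 0$, the eigenvalues $\lambda$ and $-\lambda$ of $Y$ are distinct and nonzero). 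The standard trick for "doubling" is to use the extra freedom available on $Y \boxplus Y$: the matrix $\begin{psmallmatrix} 0 & g_0 \\ -g_0 & 0\end{psmallmatrix}$ acting on the two copies, interpreted correctly inside the expanding-sum bracket notation, squares to $-(-\mathrm{I}) = +\mathrm{I}$ on the doubled space, because $g_0^2 = -\mathrm{I}_{2n}$. So the candidate is, in expanding-sum coordinates on $\mathrm{Sp}(4n,\mathbb{C})$, the element that swaps the two $Y$-blocks while applying $g_0$ and $-g_0$ respectively; concretely one writes it out as a $4\times 4$ block matrix of $n\times n$ blocks using \defref{def-expanding-sum}.

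The key steps, in order, are: (1) write $X = Y\boxplus Y$ explicitly as a $4\times4$ array of $n\times n$ blocks via \eqref{eq-expanding-sum}, so $X$ has diagonal blocks $\mathrm{J}(\lambda,n), \mathrm{J}(\lambda,n), -\mathrm{J}(\lambda,n)^T, -\mathrm{J}(\lambda,n)^T$ appropriately arranged; (2) define $g$ to be the $4n\times4n$ matrix built from the blocks of $g_0$ and $-g_0$ placed off-diagonally so as to swap the first copy of $Y$ with the second; (3) verify $g \in \mathrm{Sp}(4n,\mathbb{C})$ using \textbf{(Ob.1)} (or, more cleanly, by observing that $g$ is an expanding-sum/product of symplectic pieces and invoking \lemref{lem-prop-exp-sum}); (4) verify $g^2 = \mathrm{I}_{4n}$, which reduces to $g_0(-g_0) = -g_0^2 = \mathrm{I}_{2n}$; (5) verify $gXg^{-1} = -X$, which reduces to the single identity $g_0 Y g_0^{-1} = -Y$ already established in \lemref{lem-real-non-nil-skew-construct} together with $(-g_0)Y(-g_0)^{-1} = -Y$. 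Thus strong $\mathrm{Ad}_{\mathrm{Sp}(4n,\mathbb{C})}$-reality follows.

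The main obstacle I anticipate is bookkeeping rather than conceptual: the expanding-sum convention interleaves the blocks of $Y\boxplus Y$ in a way that does not literally look like a $2\times2$ block-matrix-of-blocks, so one must be careful that the swap-and-apply-$g_0$ operator is written in the correct coordinates and genuinely lands in $\mathrm{Sp}(4n,\mathbb{C})$ (not merely conjugate into it). A cleaner route that sidesteps most of this: by \lemref{lem-prop-exp-sum}(2) and \lemref{lem-equi-symp-sim}, $X = Y\boxplus Y$ is symplectically similar to $Y \oplus Y$, and since strong adjoint reality is a conjugation-invariant notion it suffices to exhibit the involution for $Y\oplus Y = \begin{psmallmatrix} Y & \\ & Y\end{psmallmatrix}$; there the reverser $\begin{psmallmatrix} & g_0 \\ -g_0 & \end{psmallmatrix}$ is manifestly a $2\times2$ block matrix, one checks directly that it is symplectic with respect to $\mathrm{J}_{4n}$ (using $g_0^T \mathrm{J}_{2n} g_0 = \mathrm{J}_{2n}$), that its square is $\begin{psmallmatrix} -g_0^2 & \\ & -g_0^2\end{psmallmatrix} = \mathrm{I}_{4n}$, and that it conjugates $Y\oplus Y$ to $-Y\oplus -Y = -(Y\oplus Y)$. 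This makes all five verification steps a few lines of $2\times2$ block arithmetic, and I would present the argument this way.
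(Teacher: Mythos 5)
Your underlying construction is essentially the right one --- transported to the correct coordinates, your candidate reverser coincides exactly with the matrix the paper uses --- but the route you say you would actually present contains a genuine gap. The claim that $X = Y\boxplus Y$ is symplectically similar to $Y\oplus Y$ via \lemref{lem-prop-exp-sum}(2) and \lemref{lem-equi-symp-sim} is false: \lemref{lem-equi-symp-sim} applies only when both matrices are Hamiltonian, and $Y\oplus Y = \begin{psmallmatrix} Y & \\ & Y\end{psmallmatrix}$ is \emph{not} in $\mathfrak{sp}(4n,\C)$, since by (\textbf{Ob.2}) that would force $Y=-Y^{T}$, which fails for $Y=\mathrm{J}(\lambda,n)\oplus-\mathrm{J}(\lambda,n)^{T}$ with $\lambda\neq 0$. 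Indeed no symplectic similarity between $Y\boxplus Y$ and $Y\oplus Y$ can exist, because conjugation by an element of $\mathrm{Sp}(4n,\C)$ preserves $\mathfrak{sp}(4n,\C)$. Moreover, strong $\mathrm{Ad}$-reality is invariant under \emph{symplectic} conjugation, not arbitrary similarity, so exhibiting an involution $G=\begin{psmallmatrix} & g_0\\ -g_0 & \end{psmallmatrix}$ that reverses $Y\oplus Y$ and is symplectic with respect to $\mathrm{J}_{4n}$ does not by itself yield the lemma: the similarity carrying $Y\oplus Y$ back to $Y\boxplus Y$ is a non-symplectic shuffle permutation $S$, and what you actually need is $SGS^{-1}\in\mathrm{Sp}(4n,\C)$, i.e.\ that $G$ is symplectic with respect to the transported form $S^{T}\mathrm{J}_{4n}S=\mathrm{J}_{2n}\oplus\mathrm{J}_{2n}$, not with respect to $\mathrm{J}_{4n}$.

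The gap is fixable in two ways. Your $G$ does satisfy $G^{T}(\mathrm{J}_{2n}\oplus\mathrm{J}_{2n})G=\mathrm{J}_{2n}\oplus\mathrm{J}_{2n}$ --- this is the computation that genuinely uses $g_0^{T}\mathrm{J}_{2n}g_0=\mathrm{J}_{2n}$ --- so once that is the condition you verify, the transported matrix is a symplectic involution reversing $X$ and the argument closes. The paper instead avoids any change of coordinates: in the expanding-sum coordinates one has literally $X=Y\boxplus Y=P\oplus(-P^{T})$ with $P=\mathrm{J}(\lambda,n)\oplus\mathrm{J}(\lambda,n)$, already in standard Hamiltonian block form, and one takes $g=\begin{psmallmatrix} & h\\ -h& \end{psmallmatrix}$ with $h=\begin{psmallmatrix} & \tau\\ -\tau& \end{psmallmatrix}$; then $h^{T}=-h$ and $h^{2}=-\mathrm{I}_{2n}$ give $g\in\mathrm{Sp}(4n,\C)$ and $g^{2}=\mathrm{I}_{4n}$, and $gXg^{-1}=-X$ reduces to $\tau\mathrm{J}(\lambda,n)=\mathrm{J}(\lambda,n)^{T}\tau$ from \lemref{lem-basic-inv} --- this $g$ is precisely your candidate written in the correct coordinates. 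One further small point about your first route: this $g$ is not an expanding sum of two $2n\times 2n$ symplectic matrices (its nonzero quadrants are block anti-diagonal), so \lemref{lem-prop-exp-sum} cannot be invoked for its symplecticity; the direct check via (\textbf{Ob.1}) is the right tool.
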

\begin{proof}
	Note that $X = P \oplus -P^{T}$, where $P = \mathrm{J}(\lambda, n) \oplus \mathrm{J}(\lambda, n)$. Consider $g = \begin{psmallmatrix}
		& h\\
		-	h	& 
	\end{psmallmatrix}$ such that $h= \begin{psmallmatrix}
		& \tau \\
		-	\tau 	& 
	\end{psmallmatrix}$,  where $\tau $ is an involution in $\mathrm{GL}(n,\C)$ as defined in \lemref{lem-basic-inv}. Since $h^2 = -\mathrm{I}_{2n}$ and $h^{T} =-h$, we have $g^2 = \mathrm{I}_{4n}$ and  $g \in \mathrm{Sp}(4n,\mathbb{C})$. Note that
	\begin{equation*}
		gX =-Xg  \Longleftrightarrow hP= P^{T}h  \Longleftrightarrow \tau \mathrm{J}(\lambda, n) = \mathrm{J}(\lambda, n)^{T} \tau.
	\end{equation*}
	The proof now follows from \lemref{lem-basic-inv}. 
\end{proof}

\subsection{Proof of \thmref{thm-main-str-real-sp(2n,C)}}  In view of \propref{prop-modified-canonical-form}, up to symplectic similarity, we can assume that $X$ has  the following form:
$$X = X_{1} \boxplus  X_{2} \in \mathfrak{sp}(2n,\mathbb{C}),$$ where $X_{1} \in \mathfrak{sp}(2k,\mathbb{C})$ and $X_{2} \in \mathfrak{sp}(2n-2k,\mathbb{C})$ such that $k \in \N \cup \{0\}$,  $\sigma( X_{1}) \cap \sigma( X_{2}) =  \emptyset$ and  $0$ is only  eigenvalue of $X_{1}$ (i.e., $X_1$ is a nilpotent or zero matrix).

 Let $X \in \mathfrak{sp}(2n,\mathbb{C})$  be strongly $\mathrm{Ad}_{\mathrm{Sp}(2n,\mathbb{C})}$-real. Then \lemref{lem-factor-str-real} implies that $X_{1}$ and $ X_{2}$ are strongly $\mathrm{Ad}_{\mathrm{Sp}(2k,\mathbb{C})}$-real and  strongly $\mathrm{Ad}_{\mathrm{Sp}(2n-2k,\mathbb{C})}$-real element, respectively. Suppose that $X$ is a non-zero matrix such that $k \neq n$; otherwise the proof follows from   \lemref{lem-str-real-nil-symp-alg}. Now, if $k=0$, then $X =X_2$. Furthermore, for $0<k<n$, if $X_1$ is a (non-zero) nilpotent element in $\mathfrak{sp}(2k,\mathbb{C})$, then  the  condition \eqref{cond-1-main-thm}  of \thmref{thm-main-str-real-sp(2n,C)} holds using  \lemref{lem-str-real-nil-symp-alg}. Note that $X_2 \in \mathfrak{sp}(2n-2k,\mathbb{C})$ is a strongly $\mathrm{Ad}_{\mathrm{Sp}(2n-2k,\mathbb{C})}$-real element and has only non-zero eigenvalues. 

Suppose that $X_2 \in \mathfrak{sp}(2n-2k,\mathbb{C})$ has a non-zero eigenvalue $\lambda \in \C$ such that there exists a Jordan block $\mathrm{J}(\lambda, s)$ of odd multiplicity $t\in \N$ in the Jordan decomposition of $X_2$, where $s \in \N$ and $0 \leq k <n$. In view of \propref{prop-modified-canonical-form}, up to symplectic similarity, we can assume that 
$$X_2 = X_{11} \boxplus  X_{12} \in \mathfrak{sp}(2n-2k,\mathbb{C}),$$ where $X_{11} \in \mathfrak{sp}(2m,\mathbb{C})$ and $X_{12} \in \mathfrak{sp}(2n-2k-2m,\mathbb{C})$ such that $m \in \N$, $k \in \N \cup \{0\}$, $ k < n$, $\sigma( X_{11}) \cap \sigma( X_{12}) =  \emptyset$ and $X_{11}$ has only $\lambda$ and $-\lambda$ as an eigenvalues. In view of  \lemref{lem-factor-str-real}, we get that $X_{11}$ is strongly $\mathrm{Ad}_{\mathrm{Sp}(2m,\mathbb{C})}$-real.   Therefore, $\exp(X_{11}) \in \mathrm{Sp}(2m,\mathbb{C})$ is strongly reversible in $\mathrm{Sp}(2m,\mathbb{C})$. However, the Jordan decomposition of $X_{11}$ has the Jordan block $\mathrm{J}(\exp(\lambda), s)$ with odd multiplicity $t$, and this contradicts with  \lemref{lem-str-rev-symp-grp}. Therefore, every Jordan block $\mathrm{J}(\lambda, s)$ in the Jordan decomposition of $X_2 \in \mathfrak{sp}(2n-2k,\mathbb{C}) $ has even multiplicity, where $0 \leq k<n$.
Hence,  if $X \in \mathfrak{sp}(2n,\mathbb{C})$  is strongly $\mathrm{Ad}_{\mathrm{Sp}(2n,\mathbb{C})}$-real, then the conditions \eqref{cond-1-main-thm}  and \eqref{cond-2-main-thm}  of \thmref{thm-main-str-real-sp(2n,C)} hold true. 

Conversely, let both  the conditions \eqref{cond-1-main-thm} and \eqref{cond-2-main-thm}  of \thmref{thm-main-str-real-sp(2n,C)} hold true. In view of \propref{prop-modified-canonical-form}, up to symplectic similarity, we can assume that $X$ can be written as an expanding sum of matrices of the form
\begin{equation*}
	\mathrm{J}(0, s) \oplus - (\mathrm{J}(0, s))^{T}, \hbox{ and } (\mathrm{J}(\lambda, t) \oplus - (\mathrm{J}(\lambda, t))^{T}) \boxplus (\mathrm{J}(\lambda, t) \oplus - (\mathrm{J}(\lambda, t))^{T}),
\end{equation*}
where $\lambda$ is a non-zero complex number and $s,t \in \N$. Recall that the expanding sum of two symplectic involutions is a symplectic involution; see  \lemref{lem-prop-exp-sum}. 
Therefore,  using  \lemref{lem-str-real-odd-nil}
and \lemref{lem-str-real-pair-non-nil}, we can construct a suitable involution $g$ in $\mathrm{Sp}(2n,\mathbb{C})$ such that $gXg^{-1} = -X$. Hence, $X$ is strongly $\mathrm{Ad}_{\mathrm{Sp}(2n,\mathbb{C})}$-real.  This completes the proof. 
\qed

\section{Skew-Hamiltonian matrices that are similar to their own negatives.} \label{sec-skew-class}
Recall that $\mathcal{SH}(2n,\C):= \{ X \in  \mathrm{M}(2n,\C) \mid X^T {\rm J}_{2n}  = {\rm J}_{2n}X  \}$ and the elements of $\mathcal{SH}(2n,\C)$ are known as skew-Hamiltonian matrices in  ${\M}(2n, \C) $.  An element 
$A:=\begin{psmallmatrix}
	A_{1}	& A_{2}\\
	A_{3} & A_{4} 
\end{psmallmatrix}\in \mathcal{SH}(2n,\C)$  if and only if $A_{1} = A_{4}^{T}$ and both $A_2$ and $A_3$ are skew-symmetric matrices. In particular, $\begin{psmallmatrix}
	P	& \\
	& P^{T}
\end{psmallmatrix} \in \mathcal{SH}(2n,\C)$ for all $P \in  \mathrm{M}(n,\C)$. Note that for  $A \in \mathrm{M}(m,\C)$ and  $B \in \mathrm{M}(n,\C)$,   $A \boxplus B$ is skew-Hamiltonian if and only if both the matrices  $A$ and $B$ are skew-Hamiltonian matrices, cf. \lemref{lem-prop-exp-sum}. In the following lemma, we recall a canonical form of skew-Hamiltonian matrices in $\mathcal{SH}(2n,\C)$ under symplectic similarity.
\begin{lemma}[{\cite[Lemma 6]{Cr}}]\label{lem-canonical-form-skew-Hamil-Cruz}
Each skew-Hamiltonian matrix is symplectically similar to the expanding sum of matrices of the form
	\begin{equation}
		\mathrm{J}(\lambda, k) \oplus  \mathrm{J}(\lambda, k)^{T}  
	\end{equation}
	where $\lambda \in \C$.
\end{lemma}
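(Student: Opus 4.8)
The plan is to prove the result operator-theoretically, working in the symplectic space $(\C^{2n},\omega)$ with $\omega(u,v):=u^T{\rm J}_{2n}\,v$. The defining relation $X^T{\rm J}_{2n}={\rm J}_{2n}X$ says exactly that $X$ is \emph{self-adjoint} for $\omega$, i.e. $\omega(Xu,v)=\omega(u,Xv)$ for all $u,v$, and this is the single structural input I would exploit throughout. Consequently $\omega(p(X)u,v)=\omega(u,p(X)v)$ for every polynomial $p$, and a standard argument shows that the generalized eigenspaces $V_\lambda=\ker(X-\lambda)^{2n}$ and $V_\mu$ for distinct $\lambda,\mu$ are $\omega$-orthogonal: if $(X-\mu)^b v=0$ then $\omega((X-\mu)^bu,v)=\omega(u,(X-\mu)^bv)=0$, while $(X-\mu)^b$ is invertible on $V_\lambda$, forcing $\omega(V_\lambda,v)=0$. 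Since $\omega$ is non-degenerate on $\C^{2n}$, it restricts non-degenerately to each $V_\lambda$, so $\C^{2n}=\bigoplus_\lambda V_\lambda$ is an $\omega$-orthogonal direct sum of $X$-invariant symplectic subspaces. This reduces the statement to a single generalized eigenspace, where $N:=X-\lambda$ is nilpotent and, being a polynomial shift of $X$, again self-adjoint for the restricted form.

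Next I would analyse the nilpotent self-adjoint pair $(N,\omega)$ on one block $V_\lambda$. The key computation is that the induced bilinear forms $b_i(u,v):=\omega(u,N^iv)$ are all \emph{skew-symmetric}: combining self-adjointness with the skewness of $\omega$ gives $b_i(u,v)=\omega(N^iu,v)=-\omega(v,N^iu)=-b_i(v,u)$. In particular, for any cyclic vector $u$ of height $k$ (so $N^{k-1}u\neq0=N^ku$), the cyclic subspace $C_u=\operatorname{span}\{u,Nu,\dots,N^{k-1}u\}$ is \emph{totally isotropic}, because $\omega(N^iu,N^ju)=b_{i+j}(u,u)=0$. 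This isotropy is the mechanism that forces Jordan blocks to occur in equal pairs: the restriction of $\omega$ to a single Jordan block vanishes identically, so a lone block can never be a non-degenerate symplectic summand.

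With this in hand I would extract summands one at a time, starting from a cyclic vector $u$ of maximal height $k$ (the nilpotency index). After checking that the pairing of $C_u$ with the remainder is non-degenerate at the top level, I would produce a partner generator $w$ with $N^kw=0$ whose cyclic subspace $C_w$ is again totally isotropic and is paired with $C_u$ in the standard hyperbolic fashion; in the basis of $C_w$ dual to that of $C_u$ the operator $N$ then acts by the transpose block, so on $C_u\oplus C_w$ one has $N$ acting as $\mathrm{J}(0,k)\oplus\mathrm{J}(0,k)^T$ with $\omega$ in standard symplectic form. Self-adjointness of $N$ guarantees that the symplectic complement $(C_u\oplus C_w)^{\perp_\omega}$ is again $N$-invariant and symplectic, so the construction recurses on a space of dimension $2k$ smaller. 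Shifting back by $\lambda I$ turns each extracted piece into $\mathrm{J}(\lambda,k)\oplus\mathrm{J}(\lambda,k)^T$. Finally, choosing a Darboux basis adapted to each piece realizes the $\omega$-orthogonal direct sum as an expanding sum, and \lemref{lem-prop-exp-sum} confirms that the resulting expanding sum of blocks $\mathrm{J}(\lambda,k)\oplus\mathrm{J}(\lambda,k)^T$ is skew-Hamiltonian and symplectically similar to $X$.

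The main obstacle is the partner-construction step: choosing $w$ so that $C_w$ has the \emph{same} height $k$ as $C_u$ (matching dimensions), is totally isotropic, and pairs with $C_u$ to give exactly the standard hyperbolic form. Concretely one first uses non-degeneracy of the induced pairing on the top-level quotient to find $w_0$ with $\omega(u,N^{k-1}w_0)=1$, and then corrects $w_0$ by lower-order terms in $N$ and by elements of $C_u$ to kill the unwanted pairings $\omega(N^iw,N^jw)$ while enforcing $N^kw=0$. Verifying that these corrections can be carried out simultaneously without lowering the height of $w$ is the delicate part; it is the symplectic analog of straightening a Jordan basis and is exactly where the skew-symmetry of the forms $b_i$ is used once more.
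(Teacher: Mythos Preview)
The paper does not prove this lemma; it is quoted verbatim from \cite[Lemma~6]{Cr} without argument, so there is nothing in the paper to compare your proof against. Your operator-theoretic route via $\omega$-self-adjointness is the standard one and is essentially correct: the orthogonal splitting into generalized eigenspaces and the observation that each form $b_i(u,v)=\omega(u,N^iv)$ is skew (hence every cyclic subspace is Lagrangian) are exactly the structural facts that force Jordan blocks to come in equal pairs and to assemble into $\mathrm{J}(\lambda,k)\oplus\mathrm{J}(\lambda,k)^T$ summands.

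There is one small slip in your description of the partner step. The ``unwanted pairings'' $\omega(N^iw,N^jw)$ are already zero, by the very skewness of $b_{i+j}$ that you established; no correction of $w$ is needed for isotropy of $C_w$. Likewise $N^kw=0$ is automatic because $k$ is the nilpotency index on the block. What genuinely requires adjustment are the \emph{cross}-pairings: after choosing $w_0$ with $\omega(u,N^{k-1}w_0)=1$, the numbers $\omega(u,N^{m}w_0)$ for $m<k-1$ need not vanish, and you normalise them to zero by replacing $w_0$ with $w_0+\sum_{l\ge 1}c_l\,N^lw_0$ and solving the resulting triangular system for the $c_l$. After that, reorder the basis of $C_w$ to turn the anti-diagonal pairing $\omega(N^iu,N^jw)=\delta_{i+j,k-1}$ into the standard $\mathrm{J}_{2k}$, and check that in the reordered basis $N$ acts as $\mathrm{J}(0,k)^T$ on the second factor. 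With that correction your sketch goes through.
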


The following result classifies the skew-Hamiltonian matrices, which are similar to their own negatives.
\begin{lemma}\label{lem-skew-sim-neg-class}
An element  $X \in \mathcal{SH}(2n,\C)$ is similar to $-X$ if and only if $X$ is symplectically similar to the expanding sum of matrices of the form
	\begin{equation}\label{eq-form-skew-sim-neg}
		P_{0}, \quad
		\hbox{and} \quad Q_{\lambda}\oplus Q_{\lambda}^{T}
	\end{equation}
	where $P_{0}	:=
	\mathrm{J}(0, m) \oplus
	\mathrm{J}(0, m)^{T} \in \mathcal{SH}(2m,\C)$, $Q_{\lambda} := \mathrm{J}(\lambda, k)  \oplus - \mathrm{J}(\lambda, k) \in \mathrm{GL}(2k,\C)$ and $\lambda \in \C$ is non-zero.
\end{lemma}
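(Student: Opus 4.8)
The plan is to prove both directions by passing to the symplectic Jordan form of Lemma~\ref{lem-canonical-form-skew-Hamil-Cruz}, just as in the Hamiltonian case. Write $X$ as an expanding sum of blocks $\mathrm{J}(\lambda,k)\oplus\mathrm{J}(\lambda,k)^T$. The first observation is a compatibility fact between expanding sums and the ordinary similarity relation ``$X\sim -X$'': since $(A\boxplus B)$ is similar to $A\oplus B$ by Lemma~\ref{lem-prop-exp-sum}(2), the matrix $X$ is similar to $-X$ if and only if the multiset of Jordan blocks of $X$ is invariant under $\mathrm{J}(\lambda,k)\mapsto\mathrm{J}(-\lambda,k)$. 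So the real content is purely a bookkeeping statement about which multisets of blocks occur in a skew-Hamiltonian matrix and how they pair up under $\lambda\mapsto-\lambda$.

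For the forward direction, suppose $X\in\mathcal{SH}(2n,\C)$ is similar to $-X$. By Lemma~\ref{lem-canonical-form-skew-Hamil-Cruz}, each Jordan block of $X$ appears with even multiplicity when coming from a single summand $\mathrm{J}(\lambda,k)\oplus\mathrm{J}(\lambda,k)^T$ (note $\mathrm{J}(\lambda,k)^T$ is similar to $\mathrm{J}(\lambda,k)$, so this summand contributes two copies of $\mathrm{J}(\lambda,k)$). Now collect the summands by eigenvalue. The block $\mathrm{J}(0,m)\oplus\mathrm{J}(0,m)^T$ is already of the advertised form $P_0$ and is automatically similar to its negative, so these cause no constraint. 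For a nonzero $\lambda$, the summands with eigenvalue $\lambda$ contribute copies of $\mathrm{J}(\lambda,k)$ in pairs; invariance under $\lambda\mapsto-\lambda$ forces the total multiplicity of $\mathrm{J}(\lambda,k)$ and of $\mathrm{J}(-\lambda,k)$ to agree, hence the summands split off into matched pairs — one summand $\mathrm{J}(\lambda,k)\oplus\mathrm{J}(\lambda,k)^T$ and one summand $\mathrm{J}(-\lambda,k)\oplus\mathrm{J}(-\lambda,k)^T$ — and I can combine each such pair, up to symplectic similarity (using Lemma~\ref{lem-equi-symp-sim} together with Lemma~\ref{lem-prop-exp-sum}), into a single $4k\times4k$ block. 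The point is to recognize that combined block as symplectically similar to $Q_\lambda\oplus Q_\lambda^T$ with $Q_\lambda=\mathrm{J}(\lambda,k)\oplus-\mathrm{J}(\lambda,k)$: indeed $Q_\lambda\oplus Q_\lambda^T$ has Jordan blocks $\mathrm{J}(\lambda,k),\mathrm{J}(-\lambda,k),\mathrm{J}(\lambda,k)^T,\mathrm{J}(-\lambda,k)^T$, i.e. two copies of $\mathrm{J}(\lambda,k)$ and two of $\mathrm{J}(-\lambda,k)$, matching the pair; and $Q_\lambda\oplus Q_\lambda^T\in\mathcal{SH}(4k,\C)$ since it has the form $\begin{psmallmatrix}P&\\&P^T\end{psmallmatrix}$. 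Two skew-Hamiltonian matrices with the same Jordan form are symplectically similar by Lemma~\ref{lem-equi-symp-sim}, so the pair and $Q_\lambda\oplus Q_\lambda^T$ are symplectically similar, and reassembling via expanding sums (Lemma~\ref{lem-prop-exp-sum}) gives the claimed canonical form for $X$.

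For the converse, suppose $X$ is symplectically similar to an expanding sum of blocks of the form $P_0$ and $Q_\lambda\oplus Q_\lambda^T$. Each $P_0=\mathrm{J}(0,m)\oplus\mathrm{J}(0,m)^T$ is similar to $-P_0$ because $\mathrm{J}(0,m)$ is similar to $-\mathrm{J}(0,m)$ (via the diagonal sign matrix $\sigma$ of Lemma~\ref{lem-basic-inv}) and similarly for the transpose block. Each $Q_\lambda\oplus Q_\lambda^T$ is similar to $-(Q_\lambda\oplus Q_\lambda^T)$ because $Q_\lambda=\mathrm{J}(\lambda,k)\oplus-\mathrm{J}(\lambda,k)$ is visibly similar to $-Q_\lambda=\mathrm{J}(-\lambda,k)\oplus-\mathrm{J}(-\lambda,k)$ just by swapping the two summands. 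Since similarity is compatible with $\oplus$, and $A\boxplus B$ is similar to $A\oplus B$ (Lemma~\ref{lem-prop-exp-sum}(2)), it follows that $X$ is similar to $-X$.

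The main obstacle I anticipate is the forward-direction bookkeeping: being careful that ``similar to $-X$'' really does force the matched-pair structure on the \emph{skew-Hamiltonian} Jordan form rather than just the plain Jordan form, and that the recombination of a matched pair is legitimately a \emph{symplectic} similarity (not merely an ordinary one) — this is exactly where Lemma~\ref{lem-equi-symp-sim} does the work, since all matrices in sight remain skew-Hamiltonian. A minor point to handle cleanly is the degenerate eigenvalue $\lambda=0$: there no $\lambda\mapsto-\lambda$ pairing is needed, the nilpotent summands are unconstrained, and one should simply leave them in the form $P_0$ rather than trying to force them into the $Q_\lambda$ shape.
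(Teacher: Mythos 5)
Your proposal is correct and follows essentially the same route as the paper: reduce to the canonical form of Lemma~\ref{lem-canonical-form-skew-Hamil-Cruz}, use that similar skew-Hamiltonian matrices are symplectically similar, and pair the $\pm\lambda$ summands into $Q_{\lambda}\oplus Q_{\lambda}^{T}$ (the paper records this via the identity $(\mathrm{J}(\lambda,k)\oplus\mathrm{J}(\lambda,k)^{T})\boxplus(-\mathrm{J}(\lambda,k)\oplus-\mathrm{J}(\lambda,k)^{T})=Q_{\lambda}\oplus Q_{\lambda}^{T}$). One cosmetic caveat: the statement that similarity implies symplectic similarity for \emph{skew-Hamiltonian} matrices is not literally Lemma~\ref{lem-equi-symp-sim} (stated for symplectic and Hamiltonian matrices), but the skew-Hamiltonian case of the same source, \cite[Corollary 22]{HM}, which is what the paper cites at this point.
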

\begin{proof}
Note that two matrices $X, Y \in \mathcal{SH}(2n,\C)$  are similar if and only if $X$ and $Y$ are symplectically similar; see {\cite[Corollary 22]{HM}}.  Therefore,  $X \in \mathcal{SH}(2n,\C)$ is similar to $-X$ if and only if $X $ is symplectically similar to $-X$.  Using \eqref{eq-expanding-sum}, we have  
	\begin{equation*}
		(	\mathrm{J}(\lambda, k)  \oplus  \mathrm{J}(\lambda, k)^{T}  )  \boxplus 	(-\mathrm{J}(\lambda, k)  \oplus  -\mathrm{J}(\lambda, k)^{T} ) = Q_{\lambda}\oplus Q_{\lambda}^{T},
	\end{equation*}
	for all non-zero $\lambda \in \C$. The proof now follows from \lemref{lem-canonical-form-skew-Hamil-Cruz}.
\end{proof}

\subsection{Proof of \thmref{thm-class-skew-symp}} Suppose that $X \in \mathcal{SH}(2n,\C)$ is similar to $-X$. In view of \lemref{lem-skew-sim-neg-class}, up to symplectic similarity, we can assume that $X$ can be written as an expanding sum of matrices of the form given in \eqref{eq-form-skew-sim-neg}. Consider symplectic involutions $g_{0} \in \mathrm{Sp}(2m, \C)$ and $g_{\lambda} \in \mathrm{Sp}(4k, \C)$  such that
\begin{equation}\label{eq-conj-skew-form}
	g_{0} :=  \begin{pmatrix}
		\sigma & \\
		&  \sigma 
	\end{pmatrix}, \hbox{ and } g_{\lambda}:=  \begin{pmatrix}
		h & \\
		& h
	\end{pmatrix} \quad (\lambda \in \C \setminus \{0\}), 
\end{equation}
where $\sigma \in \mathrm{GL}(m,\C)$ is an involution as defined in \lemref{lem-basic-inv} and $h=  \begin{pmatrix}
	&  \mathrm{I}_{k}\\
	\mathrm{I}_{k} 	& 
\end{pmatrix} \in \mathrm{GL}(2k,\C)$. Then we have,
\begin{equation*}
	g_{0} P_{0}  = - P_{0} g_{0} \hbox{ and } g_{\lambda} (Q_{\lambda}\oplus Q_{\lambda}^{T})  = - (Q_{\lambda}\oplus Q_{\lambda}^{T}) g_{\lambda}, 
\end{equation*}
where $P_{0}  \in \mathcal{SH}(2m,\C) $ and $Q_{\lambda}\oplus Q_{\lambda}^{T}  \in \mathcal{SH}(4k,\C) $ are as defined in \eqref{eq-form-skew-sim-neg}.  In view of \lemref{lem-prop-exp-sum}, the expanding sum of two symplectic involutions is a symplectic involution. Therefore,  using  $g_{0}$ and $g_{\lambda}$ given in \eqref{eq-conj-skew-form}, we can construct a suitable involution $g \in \mathrm{Sp}(2n,\mathbb{C})$ such that $gXg^{-1} = -X$.  Hence, the proof of the theorem follows.
\qed

\subsection*{Acknowledgment}  The authors would like to thank K. Gongopadhyay for his comments on the first draft of this paper. It is a great pleasure to thank the referee(s) for carefully reading the manuscript and providing many valuable comments. Lohan acknowledges the financial support from the IIT Kanpur Postdoctoral Fellowship, while  Maity is partially supported by the Seed Grant IISERBPR/RD/OO/2024/23.

Parts of this work were completed while the authors were visiting IISER Mohali and the Institute for Mathematical Sciences at the National University of Singapore in May-June 2024. The authors thank these institutes for their hospitality and support.

\end{document}